\documentclass[12pt]{article}

\usepackage[margin=1in]{geometry}
\usepackage{graphicx}
\usepackage{amsmath,amssymb,amsfonts}
\usepackage{amsthm}
\usepackage[title]{appendix}

\theoremstyle{plain}
\newtheorem{theorem}{Theorem}
\newtheorem{proposition}[theorem]{Proposition}
\newtheorem{lemma}[theorem]{Lemma}
\newtheorem{corollary}[theorem]{Corollary}

\theoremstyle{definition}
\newtheorem{definition}[theorem]{Definition}
\newtheorem{remark}[theorem]{Remark}

\begin{document}

\title{Voronoi limit measures for iterates of constant-coefficient differential operators on rational 
functions with simple poles}

\author{Bosco Nyandwi\thanks{boscobos715@gmail.com, Department of Mathematics, University of Rwanda, KN 67 Nyarugenge, Kigali 3900, Rwanda} \and 
Christian H\"agg\thanks{hagg@math.su.se, Department of Mathematics, Stockholm University, SE-106 91 Stockholm, Sweden} \and 
Celestin Kurujyibwami\thanks{celeku@yahoo.fr, Department of Mathematics, University of Rwanda} \and 
Leon Fidele Ruganzu Uwimbabazi\thanks{ruganzu01@gmail.com, Department of Mathematics, University of Rwanda}}

\date{}

\maketitle

\begin{abstract}
B\o gvad and H\"agg proved that for a rational function with simple poles, the zeros of successive 
derivatives accumulate on the Voronoi diagram of the pole set, and the normalized zero-counting measures 
converge to a canonical probability measure supported on this diagram. We extend this result from pure 
derivatives to iterates of an arbitrary monic constant-coefficient differential operator.

Let $h(z)=A(z)/B(z)$ be a reduced rational function, where $B$ is monic of degree $b\ge2$ with distinct
 zeros $S=\{z_1,\dots,z_b\}$, and let $P(D)=\sum_{j=0}^m c_jD^j$ be a monic constant-coefficient 
differential operator of order $m\ge1$. After clearing denominators, we can write
 $P(D)^n(h)=\widetilde A_n/B^{mn+1}$ and study the zeros of the numerator polynomials $\widetilde A_n$. 
If $r:=\min\{j:c_j\neq0\}$, then (after passing to the proper part of $h$ when $r>0$) the associated 
zero-counting measures converge vaguely to
$$\frac{m(b-1)}{bm-r}\,\mu_S,$$
where $\mu_S$ is the B\o gvad--H\"agg probability measure supported on the Voronoi diagram $V_S$.
 In particular, the limit is a probability measure exactly when $P(D)=D^m$; otherwise a proportion
 $\frac{m-r}{bm-r}$ of zeros escapes to infinity (in the sense of vague convergence). When $r<m$,
 the unshifted logarithmic potentials diverge, but an explicit factorial renormalization yields 
$L^1_{\mathrm{loc}}(\mathbb C)$ convergence to a subharmonic limit with Riesz measure 
$\frac{m(b-1)}{bm-r}\,\mu_S$. Apart from this scalar factor, the limiting measure is determined 
solely by the pole configuration; the coefficients of $P(D)$ affect only an additive constant in the
 limiting potential.
\end{abstract}

\noindent\textbf{Keywords:} constant-coefficient linear differential operators; Voronoi diagram; zero-counting measure; logarithmic potential; rational functions

\noindent\textbf{MSC Classification 2020:} 30C15, 31A15

\section{Introduction}\label{sec1}

In 1922, George P\'olya introduced the \emph{final set} of a meromorphic function and proved that all finite limit points of zeros of successive derivatives lie in this set \cite{GP}; see also \cite{Polya}.
For rational functions with simple poles, this final set is closely related to the Voronoi diagram of the pole set \cite{Hayman}.
B\o gvad and H\"agg showed that for such rational functions the normalized zero-counting measures of the numerator polynomials of $f^{(n)}$ converge to a canonical probability measure supported on the Voronoi diagram \cite{B:H}.
H\"agg later extended this to meromorphic functions of the form $f=(A/B)e^T$, with a factorial renormalization in the logarithmic potentials \cite{Hagg}.
See \cite{PJS,PJS1,Weiss,BSTW,Robert} for further extensions and related work.

In the present paper we study iterates of monic constant-coefficient differential operators acting on rational functions with simple poles.
Let $h(z)=A(z)/B(z)$ be reduced (i.e., $\gcd(A,B)=1$), where $B$ is monic of degree $b\ge2$ with distinct zeros $z_1,\dots,z_b$.
Let
$$P(D)=\sum_{j=0}^{m}c_jD^j,\qquad m\ge1,\qquad c_m=1,$$
be a monic constant-coefficient differential operator, where $D=\frac{d}{dz}$, and write its symbol as
$$q(x)=\sum_{j=0}^{m}c_jx^j.$$
Set $r:=\operatorname{ord}_0 q\in\{0,\dots,m\}$, i.e., the smallest index $j$ with $c_j\neq0$ (so $c_0=\cdots=c_{r-1}=0$ and $c_r\neq0$). Equivalently, $q(x)=x^r\widehat q(x)$ with $\widehat q(0)=c_r\neq0$, so $P(D)=D^r\widehat q(D)$.
The assumption that $B$ and $P(D)$ are monic is only a normalization: one may always scale $A$ and $B$ so that $B$ becomes monic, and multiplying $P(D)$ by a nonzero constant $\alpha$ only scales each iterate $P(D)^n(h)$ by $\alpha^n$.

Clearing denominators, for each $n\ge1$ we may write
$$P(D)^n(h)=\frac{\widetilde A_n(z)}{B(z)^{mn+1}},$$
for a polynomial numerator $\widetilde A_n$; in fact $\gcd(\widetilde A_n,B)=1$ (Lemma~\ref{lem_deg_Pm}).
We study the zeros of $\widetilde A_n$ via the normalized zero-counting measures $\mu_n:=\mu_{\widetilde A_n}$ (Definition~\ref{def:zcmeasure}).

Our main results (Theorems~\ref{thr5:mainm} and \ref{thr:r_positive}) describe the asymptotic zero distribution of $\widetilde A_n$.
Set $S:=\{z_1,\dots,z_b\}$ and let $V_S$ be its Voronoi diagram.

When $r>0$, the iterates $P(D)^n$ eventually annihilate the polynomial part of $h$.
More precisely, writing $h=Q+R/B$ with $Q$ a polynomial and $\deg R<b$ (so $R/B$ is proper), Lemma~\ref{lem:kill_poly_part} shows that $P(D)^n(h)=P(D)^n(R/B)$ for all sufficiently large $n$.
Thus, for asymptotic questions, we may (and will) assume $h$ is proper when $r>0$.

Lemma~\ref{lem_deg_Pm} shows that the numerator degrees satisfy
$$d_n:=\deg(\widetilde A_n)=a+n(bm-r),\qquad a=\deg A$$
(in particular, $d_n=a+bmn$ when $r=0$).
Then $\mu_n$ converges vaguely on $\mathbb C$ to the canonical subprobability measure
\begin{equation}\label{eq:main_limit_measure}
\mu_{c,r}=\frac{m(b-1)}{bm-r}\,\mu_S,
\end{equation}
where $\mu_S$ is the B\o gvad--H\"agg probability measure supported on $V_S$ (Subsection~\ref{sub:sec2_BH}); the subscript $c$ stands for ``canonical'' (not to be confused with the operator coefficients $c_j$).
In particular, $\mu_{c,r}$ is a subprobability measure of total mass $m(b-1)/(bm-r)$, and it is a probability measure exactly in the pure derivative case $P(D)=D^m$ (i.e., $r=m$).
Apart from this dependence on $r$, the limiting measure depends only on the pole configuration; the coefficients of $P(D)$ affect the limit potential only through an additive constant (see Theorems~\ref{thr5:mainm} and \ref{thr:r_positive}).

If $r<m$, the logarithmic potentials $\mathcal L_{\mu_n}$ diverge to $+\infty$ off $V_S$, but after subtracting $\bigl(\log((mn)!)-\log((rn)!)\bigr)/d_n$ one obtains $L^1_{\mathrm{loc}}(\mathbb C)$ convergence to an explicit subharmonic limit whose Riesz measure is $\mu_{c,r}$.

When $b=1$ (a single simple pole), the Voronoi diagram is empty. If $r=m$ (so $P(D)=D^m$) then the numerator is constant, while if $r<m$ all zeros escape to infinity; this degenerate case is recorded in Appendix~\ref{app:onepole}.
For $m=1$ this specializes to H\"agg's theorem \cite{Hagg} (when $r=0$, i.e., $c_0\neq0$) and to the B\o gvad--H\"agg theorem \cite{B:H} (when $P(D)=D$, i.e., $r=m=1$).

The paper is organized as follows.
Section~\ref{sec2} reviews the needed background on Voronoi diagrams, logarithmic potentials, and the B\o gvad--H\"agg measure.
Section~\ref{sec3} recalls the main results of \cite{B:H,Hagg}.
Sections~\ref{sec4}--\ref{sec5} prove our main theorems for general monic constant-coefficient operators $P(D)$: Section~\ref{sec4} treats the case $r=0$ (equivalently, $c_0\neq0$), while Section~\ref{sec5} treats $r\ge1$.
Section~\ref{sec6} concludes with remarks and further directions, while Appendix~\ref{app:onepole} records the degenerate one-pole case.

\section{Preliminaries}\label{sec2}

\subsection{Voronoi diagrams}\label{sub:sec2}

Let $S=\{z_1,\dots,z_b\}\subset\mathbb{C}$ be a finite set of distinct points and define the distance function
\begin{equation}\label{psi_def}
\psi_S(z):=\min_{1\le i\le b}|z-z_i|.
\end{equation}
For each $i$, the (closed) \emph{Voronoi cell} of $z_i$ is
$$V_i:=\{z\in\mathbb{C}:\psi_S(z)=|z-z_i|\}.$$
We also write
$$V_i^\circ:=\{z\in\mathbb{C}:|z-z_i|<|z-z_j|\ \text{for all }j\ne i\}$$
for the corresponding \emph{open} cell (where the nearest site is uniquely $z_i$).
Note that the closed cells cover the plane, $\bigcup_{i=1}^b V_i=\mathbb C$, and that the open cells $V_i^\circ$ are pairwise disjoint.

The \emph{Voronoi diagram} (or \emph{Voronoi skeleton}) associated with $S$ is the closed set
$$V_S:=\{z\in\mathbb{C}:\text{the minimum in \eqref{psi_def} is attained for at least two indices}\}
=\bigcup_{1\le i<j\le b}V_{ij},$$
where each \emph{Voronoi edge} is
$$V_{ij}:=\{z\in\mathbb{C}:|z-z_i|=|z-z_j|=\psi_S(z)\}.$$
Equivalently, $z\notin V_S$ if and only if the nearest site is unique, in which case $z\in V_i^\circ$ for exactly one index $i$.
The \emph{Voronoi vertices} are those points where the minimum is attained by at least three indices.

Figure~\ref{f0} shows an example. The function $\psi_S$ is piecewise $C^1$, with locus of non-differentiability contained in $S\cup V_S$.

\begin{figure}[!hbt]
\centering
\includegraphics[width=0.65\linewidth]{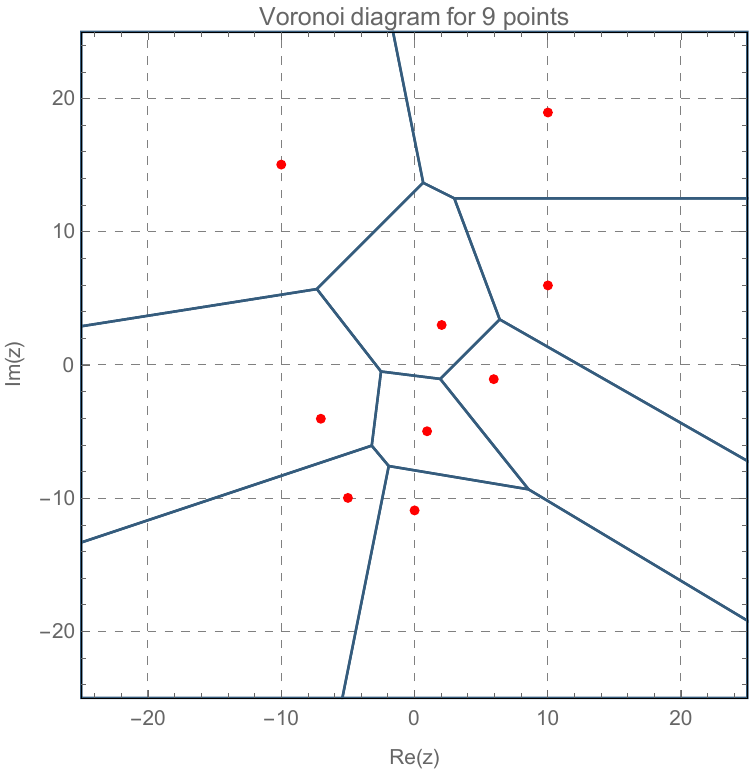}
\caption{The Voronoi diagram for the nine points given by the zeros of
$f(z)=(z+5+10i)(z-10-6i)(z-1+5i)(z-6+i)(z-2-3i)(z+7+4i)(z+11i)(z-10-19i)(z+10-15i)$.
The Voronoi edges are shown (bounded edges as line segments, unbounded edges as rays).}\label{f0}
\end{figure}

Voronoi diagrams appear in many contexts (see, e.g., \cite{A:FR}); here we only use their basic geometric characterization via the distance function $\psi_S$ and the induced Voronoi cells and edges.

To describe asymptotic zero distributions we will work with normalized zero-counting measures of polynomials and vague convergence of measures; we recall these notions next.

\begin{definition}[Zero-counting measure]\label{def:zcmeasure}
Let $Q$ be a nonconstant polynomial of degree $k$. Write
$$Q(z)=\mathrm{LC}(Q)\prod_{j=1}^{s}(z-\zeta_j)^{m_j},$$
where $\zeta_1,\dots,\zeta_s$ are the distinct zeros of $Q$ and $\sum_{j=1}^s m_j=k$. The associated \emph{zero-counting measure} is
\begin{equation}\label{zcmeas1}
\mu_Q:=\frac{1}{k}\sum_{j=1}^{s} m_j\,\delta_{\zeta_j},
\end{equation}
where $\delta_{\zeta}$ denotes the unit point mass at $\zeta$. In particular, $\mu_Q(\mathbb C)=1$.
\end{definition}

\begin{remark}\label{rem:logpot_poly}
The logarithmic potential of $\mu_Q$ can be expressed directly in terms of $Q$:
\begin{equation}\label{logpot_poly}
\mathcal L_{\mu_Q}(z)=\frac{1}{k}\log|Q(z)|-\frac{1}{k}\log|\mathrm{LC}(Q)|,
\end{equation}
where $\mathrm{LC}(Q)$ denotes the leading coefficient of $Q$.
\end{remark}

\begin{definition}[Vague convergence]\label{def:vague}
A sequence of finite Borel measures $\{\nu_n\}$ on $\mathbb C$ is said to converge \emph{vaguely} to a finite Borel measure $\nu$ if
$$\int_{\mathbb C}\varphi\,d\nu_n\longrightarrow \int_{\mathbb C}\varphi\,d\nu
\qquad\text{for every }\varphi\in C_c(\mathbb C).$$
(Here $C_c(\mathbb C)$ denotes the continuous functions with compact support.)
In particular, vague limits of probability measures may have total mass strictly less than $1$ due to escape of mass to infinity; such limits are (finite) \emph{subprobability measures}.
\end{definition}

\subsection{Notation}
Throughout, we use the falling factorial notation
$$(x)_k := x(x-1)\cdots(x-k+1),\qquad k\ge0,$$
with the convention $(x)_0=1$.

\subsection{Logarithmic potentials and distributional Laplacians}\label{sub:sec2_logpot}

For each finite positive Borel measure $\mu$ on $\mathbb{C}$ with compact support (in particular, for the zero-counting measures considered below), its logarithmic potential is defined by
\begin{equation}\label{tpm2}
\mathcal L_{\mu}(z):=\int_{\mathbb{C}} \log|z-w|\,d\mu(w).
\end{equation}
This takes values in $[-\infty,\infty)$ and is finite on $\mathbb C\setminus \mathrm{supp}\,\mu$.
Then $\mathcal L_{\mu}\in L^1_{\mathrm{loc}}(\mathbb C)$ and, in the sense of distributions,
\begin{equation}\label{tpm4}
\mu=\frac{1}{2\pi}\Delta \mathcal L_{\mu}.
\end{equation}

More generally, whenever $u\in L^1_{\mathrm{loc}}(\mathbb C)$ is subharmonic and $\mu=(2\pi)^{-1}\Delta u$, we will also refer to $u$ as a logarithmic potential of $\mu$. In particular, the B\o gvad--H\"agg potential $\Xi$ defined below in Subsection~\ref{sub:sec2_BH} is a logarithmic potential of its Riesz measure $\mu_S$ (even though $\mathrm{supp}\,\mu_S\subseteq V_S$ is unbounded).

\subsection{Harmonic and subharmonic functions}\label{sub:sec2_harm}

We use standard notions from potential theory in the complex plane; see e.g.\ \cite{S.E.SR}.
Let $\Omega\subset\mathbb C$ be open, and write $\mathbb{D}(z_0,\rho)$ for the open disc of radius $\rho$ centered at $z_0$.
A function $u:\Omega\to[-\infty,\infty)$ which is not identically $-\infty$ is called \emph{subharmonic} in $\Omega$
if it is upper semicontinuous and satisfies the sub-mean value property:
for every $z_0\in\Omega$ and every $\rho>0$ with $\overline{\mathbb{D}(z_0,\rho)}\subset \Omega$,
$$u(z_0)\le \frac{1}{2\pi}\int_{0}^{2\pi} u(z_0+\rho e^{it})\,dt.$$
Equivalently, $u\in L^1_{\mathrm{loc}}(\Omega)$ and $\Delta u$ is a positive distribution.
In particular, if $u\in C^2(\Omega)$ then $u$ is subharmonic if and only if $\Delta u\ge0$ in $\Omega$.

We will use that the maximum of finitely many subharmonic functions is subharmonic, and that
$$\Delta\log|z|=2\pi\delta_0$$
in the sense of distributions.

\subsection{The B\o gvad--H\"agg potential and measure}\label{sub:sec2_BH}

Let $S=\{z_1,\ldots,z_b\}\subset\mathbb C$ be the set of distinct zeros of a monic polynomial
$$B(z)=\prod_{i=1}^{b}(z-z_i),\qquad b\ge 2.$$
With $\psi_S$ as in \eqref{psi_def}, define for $z\in\mathbb C\setminus S$
\begin{equation}\label{defXi}
\Xi(z):=\frac{1}{b-1}\bigl(-\log\psi_S(z)+\log|B(z)|\bigr)
=\frac{1}{b-1}\left(\max_{1\le i\le b}\log\frac{1}{|z-z_i|}+\log|B(z)|\right),
\end{equation}
where we used $-\log\min_i|z-z_i|=\max_i\log\frac{1}{|z-z_i|}$.
On the open Voronoi cell $V_i^\circ$ (i.e.\ where $\psi_S(z)=|z-z_i|$ and the minimizer in \eqref{psi_def} is unique) we have
$$\Xi(z)=\frac{1}{b-1}\sum_{j\ne i}\log|z-z_j|,$$
which is harmonic on $V_i^\circ$ and, since it has no singularity at $z_i$, extends harmonically to a neighborhood of $z_i$.
Since adjacent cells give the same boundary values on their common edge, $\Xi$ extends to a continuous subharmonic function on all of $\mathbb C$; we keep the notation $\Xi$ for this extension.

\begin{proposition}\label{prop0}
The function $\Xi$ is continuous and subharmonic on $\mathbb C$, harmonic on the interior of each Voronoi cell, and its Riesz measure
\begin{equation}\label{zcma1}
\mu_S:=\frac{1}{2\pi}\Delta\Xi
\end{equation}
is a probability measure supported on the Voronoi diagram $V_S$.
\end{proposition}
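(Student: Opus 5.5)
The plan is to work directly from the formula \eqref{defXi}, $\Xi=\frac{1}{b-1}\bigl(\max_i\log\frac{1}{|z-z_i|}+\log|B|\bigr)$, and split it into pieces that are easy to handle near the poles and away from them.

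\emph{Continuity and the extension.} Set $u_i(z):=\sum_{j\ne i}\log|z-z_j|$. This is harmonic on $\mathbb C\setminus(S\setminus\{z_i\})$, and in particular on a neighbourhood of $\overline{V_i}$ minus the other sites. Because $z_j\notin V_i$ for $j\ne i$, the function $u_i$ is actually harmonic on a neighbourhood of $V_i$.

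For $z\notin S$ we have $\log|B(z)|-\log|z-z_i|=u_i(z)$. Hence
$$\Xi(z)=\frac{1}{b-1}\max_{1\le i\le b}u_i(z)\qquad (z\notin S).$$
The right-hand side is a maximum of finitely many functions, each continuous with values in $[-\infty,\infty)$. At $z_k$ the function $u_k$ is finite while every $u_i$ with $i\ne k$ equals $-\infty$. So the maximum is finite and continuous everywhere, including at the sites, and it is the continuous extension of $\Xi$.

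\emph{Subharmonicity.} Each $u_i$ is subharmonic on $\mathbb C$, since it is a sum of functions $\log|z-z_j|$. A maximum of finitely many subharmonic functions is subharmonic, as recalled in Subsection~\ref{sub:sec2_harm}. Therefore $\Xi$ is subharmonic on $\mathbb C$.

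\emph{Harmonicity on cells.} On the interior of $V_i$ we have $\max_j u_j=u_i$, because $u_i-u_j=\log|z-z_j|-\log|z-z_i|>0$ off $V_S$. Thus $\Xi=u_i/(b-1)$ there, which is harmonic. It follows that $\mu_S=\frac1{2\pi}\Delta\Xi$ is a positive measure supported in the complement of $\bigcup_i \mathrm{int}\,V_i$, which is exactly $V_S$.

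\emph{Total mass.} This is the main step, and I will do it with a flux computation. For $R$ large, the disc $\mathbb D(0,R)$ contains $S$ and all Voronoi vertices. Pick a radial test function $\varphi_R\in C_c^\infty$ with $\varphi_R=1$ on $\mathbb D(0,R)$ and $\varphi_R=0$ outside $\mathbb D(0,2R)$. I will show that $\int\varphi_R\,d\mu_S\to1$.

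Near infinity, write $\Xi=\frac1{b-1}\bigl(\log|B|-\log\psi_S\bigr)$. We have
$$\log|B(z)|=b\log|z|+O(1/|z|),$$
because $\log|B(z)|-b\log|z|=\sum_j\log|1-z_j/z|$. Similarly, $\psi_S(z)=|z|\,(1+O(1/|z|))$. Together these give
$$\Xi(z)=\log|z|+O(1/|z|).$$

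An $O(1/|z|)$ error alone does not control second derivatives, so I will instead compute $\frac1{2\pi}\int\Delta\varphi_R\,\Xi$ by splitting
$$\Xi=\frac{1}{b-1}\bigl(\log|B|-\log\psi_S\bigr).$$
For the first piece, $\frac1{2\pi}\int\Delta\varphi_R\log|B|=b$, since $\Delta\log|B|=2\pi\sum_j\delta_{z_j}$ and all the $z_j$ lie where $\varphi_R=1$.

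For the second piece, $\log\psi_S=\min_i\log|z-z_i|$. Outside a compact set this is piecewise of the form $\log|z-z_i|$ on unbounded sectors bounded by rays. Its Laplacian there is a positive measure on those rays, namely $-\Delta\log\psi_S\ge0$ on the edges. By scaling, its density along a ray at distance $t$ from the origin is $O(1/t^2)$, because the angle between the normals of $\log|z-z_i|$ and $\log|z-z_j|$ is $O(1/t)$ and the gradient magnitudes are about $1/t$.

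Compare with the radial function $\log|z|$, whose Laplacian is $2\pi\delta_0$, giving $\frac1{2\pi}\int\Delta\varphi_R\log|z|=1$. The difference $\log\psi_S-\log|z|$ is smooth off the rays, equal to $O(1/|z|)$, with gradient $O(1/|z|^2)$. Integrating by parts once, against $\nabla\varphi_R=O(1/R)$ on an annulus of area $O(R^2)$, shows that its contribution is $O(1/R)\to0$.

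Hence $\frac1{2\pi}\int\Delta\varphi_R\log\psi_S\to1$, and
$$\mu_S(\mathbb C)=\lim_{R\to\infty}\frac{b-1}{b-1}=1.$$

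I expect the main obstacle to be this last estimate: justifying the one-sided integration by parts across the rays, and checking the $O(1/|z|^2)$ gradient bound for the piecewise function. As a fallback, I would compute the mass edge by edge as the jump of the normal derivative, $\frac1{2\pi(b-1)}\int_{V_{ij}}\bigl|\partial_n\log|z-z_i|-\partial_n\log|z-z_j|\bigr|\,ds$. This expresses $\mu_S(\mathbb C)$ through the Gauss--Green formula on a large disc as $\frac1{b-1}\bigl(b-\frac1{2\pi}\oint_{|z|=R}\partial_r\log\psi_S\,ds\bigr)$, and the boundary integral tends to $1$.
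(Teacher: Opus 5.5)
Your proposal is correct. The paper's own ``proof'' of this proposition is only a citation of B\o gvad--H\"agg (Prop.~2.2). The text around it sketches the ingredients:
\begin{itemize}
\item continuity, by matching the cellwise formulas $\frac{1}{b-1}\sum_{j\ne i}\log|z-z_j|$ across edges;
\item total mass, in the subsequent Remark, from the growth $\Xi(z)=\log|z|+O(1)$ together with the general fact (stated, not proved) that a subharmonic $u$ with $u=M\log|z|+O(1)$ has Riesz mass $M$.
\end{itemize}

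Your route differs in two ways. First, rewriting $\Xi=\frac{1}{b-1}\max_i u_i$ with $u_i=\sum_{j\ne i}\log|z-z_j|$ gives continuity and global subharmonicity, including at the sites, in one stroke. Edge matching alone yields only continuity; subharmonicity really comes from the max structure, which your formulation makes explicit. Second, for the total mass you replace the general growth principle with the sharper asymptotic $\Xi=\log|z|+O(1/|z|)$ and an explicit cutoff computation. That makes the argument self-contained.

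Your worry that an $O(1/|z|)$ error ``does not control second derivatives'' is misplaced. In $\int\varphi_R\,d\mu_S=\frac{1}{2\pi}\int\Delta\varphi_R\,\Xi\,d\lambda$ all derivatives fall on $\varphi_R$. With $\varphi_R(z)=\chi(|z|/R)$, the quantity $\int|\Delta\varphi_R|\,d\lambda=\int|\Delta(\chi(|w|))|\,d\lambda(w)$ does not depend on $R$. Since $\frac{1}{2\pi}\int\Delta\varphi_R\log|z|\,d\lambda=\varphi_R(0)=1$, you get directly
\[
\Bigl|\int\varphi_R\,d\mu_S-1\Bigr|=\frac{1}{2\pi}\Bigl|\int\Delta\varphi_R\,\bigl(\Xi-\log|z|\bigr)\,d\lambda\Bigr|\le C\sup_{R\le|z|\le 2R}\bigl|\Xi(z)-\log|z|\bigr|=O(1/R).
\]
This needs no splitting into $\log|B|$ and $\log\psi_S$, no integration by parts, and no gradient bound.

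The obstacle you anticipated is also not real. $\log\psi_S$ is Lipschitz on the annulus, so a single integration by parts against a smooth test function produces no jump terms across the rays. Jump terms would only appear after a second integration by parts.

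The sharper $O(1/|z|)$ error is what makes this one-line estimate work. With only the $O(1)$ error used in the paper's Remark, the same estimate gives merely an $O(1)$ discrepancy. You would then need circle means (convexity in $\log r$) or a logarithmically spread cutoff, which is exactly the content of the general fact the paper invokes without proof.
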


\begin{proof}
See \cite[Prop.~2.2]{B:H}.
\end{proof}

\begin{remark}
Even though the Voronoi diagram $V_S$ is unbounded, the measure $\mu_S$ is finite (in fact a probability measure). Indeed, as $|z|\to\infty$ we have $\psi_S(z)=|z|+O(1)$ and $B(z)=z^b+O(z^{b-1})$, hence
$$\Xi(z)=\frac{1}{b-1}\bigl(-\log\psi_S(z)+\log|B(z)|\bigr)=\log|z|+O(1).$$
In general, if a subharmonic function $u$ satisfies $u(z)=M\log|z|+O(1)$ as $|z|\to\infty$, then its Riesz measure $(2\pi)^{-1}\Delta u$ has total mass $M$. Applying this to $\Xi$ gives $\mu_S(\mathbb C)=1$.
\end{remark}

\section{Recent related results}\label{sec3}

\subsection{Results of B\o gvad and H\"agg}\label{subsec31}

Based on this canonical measure defined by \eqref{zcma1}, we can restate an important result proved in \cite{B:H} as follows:

\begin{theorem}[R. B\o gvad--Ch. H\"agg]\label{thr1}
Let $f=A/B$ be a reduced rational function, where $B$ is a monic polynomial of degree
$b\geq 2$ with distinct zeros $z_1,\ldots,z_b$, and set $S:=\{z_1,\ldots,z_b\}$.
For each $n\ge1$ let $\mu_n$ be the (normalized) zero-counting measure of the numerator polynomial of $f^{(n)}$ (Definition~\ref{def:zcmeasure}).
Then, as $n\to\infty$:
\begin{enumerate}
\item[(a)] the measures $\mu_n$ converge vaguely on $\mathbb C$ to the probability measure $\mu_S$ supported on the Voronoi diagram of $S$;
\item[(b)] the logarithmic potentials $\mathcal L_{\mu_n}$ converge in $L^{1}_{\mathrm{loc}}(\mathbb C)$ to the logarithmic potential of $\mu_S$, namely the B\o gvad--H\"agg potential $\Xi$ from \eqref{defXi}.
\end{enumerate}
\end{theorem}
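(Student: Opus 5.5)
The plan is to work from the partial fraction expansion. Write $f=Q+\sum_{i=1}^b \frac{a_i}{z-z_i}$ with $a_i=A(z_i)/B'(z_i)\neq0$; the residues are nonzero because $f$ is reduced with simple poles. For $n>\deg Q$ we then have
$$f^{(n)}(z)=(-1)^n n!\sum_{i=1}^b \frac{a_i}{(z-z_i)^{n+1}},$$
so the numerator polynomial is
$$A_n(z)=(-1)^n n!\sum_{i=1}^b a_i\prod_{j\ne i}(z-z_j)^{n+1}=(-1)^n n!\,B(z)^{n+1}\sum_{i=1}^b\frac{a_i}{(z-z_i)^{n+1}}.$$
It has degree $(b-1)(n+1)$ for generic configurations; if the leading term $\sum_i a_i$ cancels, the degree drops by a bounded amount that does not affect the asymptotics. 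Part (a) follows from part (b), because vague convergence of $\mu_n=\frac1{2\pi}\Delta\mathcal L_{\mu_n}$ follows from $L^1_{\mathrm{loc}}$ convergence of the potentials, with limit $\frac1{2\pi}\Delta\Xi=\mu_S$ by Proposition~\ref{prop0}. So the main task is (b).

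For (b), use Remark~\ref{rem:logpot_poly}:
$$\mathcal L_{\mu_n}=\frac{1}{d_n}\log|A_n|-\frac1{d_n}\log|\mathrm{LC}(A_n)|.$$
The first step is pointwise convergence off $V_S$. For $z\in V_i^\circ\setminus\{z_i\}$, the term $a_i(z-z_i)^{-n-1}$ dominates the sum geometrically, so
$$\frac1{n}\log\Bigl|\sum_k a_k(z-z_k)^{-n-1}\Bigr|\to-\log|z-z_i|=-\log\psi_S(z),$$
locally uniformly on compact subsets of $V_i^\circ\setminus\{z_i\}$. The factor $n!$ is a constant, as is the leading coefficient. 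Since $|\mathrm{LC}(A_n)|=n!\,|\sum_i a_i|$ up to the bounded degree-drop adjustment, which is subexponential in $n$, it cancels in the normalization. We then get $\mathcal L_{\mu_n}(z)\to\frac1{b-1}(\log|B(z)|-\log\psi_S(z))=\Xi(z)$ there. Near $z_i$ itself, $A_n(z)=(-1)^nn!\,[a_i\prod_{j\ne i}(z-z_j)^{n+1}+O((z-z_i)^{n+1})]$, and the same limit follows with no singularity. So the convergence holds locally uniformly on $\mathbb C\setminus V_S$.

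The second step upgrades this to $L^1_{\mathrm{loc}}$. Let $u_n=\mathcal L_{\mu_n}$. These functions are subharmonic and locally uniformly bounded above, by the triangle inequality and the normalization: $u_n\le \Xi+o(1)$ on compact sets, using $|\sum a_k(z-z_k)^{-n-1}|\le C\,\psi_S^{-n-1}$. They also do not tend to $-\infty$ identically, since they converge at points off $V_S$. By the standard compactness theorem for subharmonic functions (Hörmander), every subsequence has a further subsequence converging in $L^1_{\mathrm{loc}}$ to some subharmonic $u$ or to $-\infty$; the latter is excluded. Since $V_S$ has Lebesgue measure zero and $u_n\to\Xi$ pointwise off $V_S$, we get $u=\Xi$ a.e., hence everywhere because both are subharmonic. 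So the full sequence converges in $L^1_{\mathrm{loc}}$.

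The main obstacle is the leading-coefficient and degree bookkeeping when $\sum_i a_i=0$ (or higher cancellations), together with the rigorous justification of the upper bound needed for the compactness theorem. I would handle the first by expanding $A_n$ at infinity: the coefficients are power sums $\sum_i a_i z_i^k$ times binomial factors polynomial in $n$. Because the $a_i$ are nonzero and the $z_i$ distinct, a Vandermonde argument shows some $k\le b-1$ gives a nonzero value. So the degree drop is at most $b-1$ and $|\mathrm{LC}|=n!\cdot n^{O(1)}$, and $\frac1{d_n}\log$ of this differs from $\frac{\log n!}{d_n}$ by $o(1)$. Because the $\log n!$ term cancels exactly between $\log|A_n|$ and $\log|\mathrm{LC}(A_n)|$, no renormalization is needed in this pure-derivative case.
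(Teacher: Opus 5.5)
Your proof is correct. It is essentially the argument the paper gives for Theorems~\ref{thr5:mainm} and \ref{thr:r_positive}, specialized to $P(D)=D$: pointwise limit off $V_S$, a locally uniform upper bound fed into Hartogs' lemma, identification of the unique $L^1_{\mathrm{loc}}$ cluster point, and distributional Laplacians. (For this statement itself the paper only cites \cite{B:H}, whose original proof used explicit integral estimates instead; see Remark~\ref{rem:L1loc_method}.) Your version simplifies in two places. First, for $n>\deg Q$ the explicit formula $A_n=(-1)^n n!\sum_i a_i\prod_{j\ne i}(z-z_j)^{n+1}$ gives $\mathcal L_{\mu_n}\le \Xi+o(1)$ uniformly on compacts, including at the poles, so you avoid both the Cauchy estimates of Lemma~\ref{lem_growth_m} and the maximum-principle step near $S$. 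Second, your power-sum/Vandermonde computation of the degree and leading coefficient replaces the recursion of Lemma~\ref{lem:Ak_degree_LC} and the reduction to the proper part in Lemma~\ref{lem:kill_poly_part}.
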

See \cite{B:H} for numerical illustrations.

H\"agg later generalized Theorem~\ref{thr1} to meromorphic functions of the form $f=(A/B)e^T$, where $A,B,T$ are polynomials with $b=\deg B\ge2$ and $t=\deg T\ge1$ \cite{Hagg}.
His theorem identifies an explicit canonical subprobability measure supported on the Voronoi diagram of the poles of $f$ (i.e., the zeros of $B$) and describes the asymptotic zero distribution of $f^{(n)}$.
We record his result in the form used below.
\begin{theorem}[Ch. H\"agg]\label{thr2}
Let
$$f(z)=\frac{A(z)}{B(z)}e^{T(z)},$$
where $A,B,T$ are polynomials of degrees $a,b,t$, respectively.
Assume $b\ge2$, $t\ge1$, $\gcd(A,B)=1$, and that $B$ is monic with distinct zeros $z_1,\ldots,z_b$; set $S:=\{z_1,\ldots,z_b\}$.
For each $n\ge1$ let $\mu_n$ be the (normalized) zero-counting measure of the numerator polynomial of $f^{(n)}$ (Definition~\ref{def:zcmeasure}).
Then the following holds.
\begin{enumerate}
	\item[(i)] The measures $\mu_n$ converge vaguely on $\mathbb C$ to the canonical subprobability measure
	$$\mu_{\mathrm{can}}=\frac{b-1}{b+t-1}\,\mu_S,$$
	supported on the Voronoi diagram of $S$; in particular $\mu_{\mathrm{can}}(\mathbb C)=\frac{b-1}{b+t-1}$.
	\item[(ii)] The logarithmic potentials $\mathcal L_{\mu_n}(z)$ diverge as $n\to\infty$.
	\item[(iii)] The shifted logarithmic potentials
	$$\tilde{\mathcal L}_{\mu_n}(z):=\mathcal L_{\mu_n}(z)-\frac{\log(n!)}{n(b+t-1)+a}$$
	converge in $L^{1}_{\mathrm{loc}}(\mathbb C)$ to the logarithmic potential of $\mu_{\mathrm{can}}$,
	$$\widehat{\Xi}(z)=\dfrac{1}{b+t-1}\left(\max_{1\le i\le b}\log\frac{1}{|z-z_i|}
		+\log \left|B(z)\right|-\log\left(\left|d_t\right|t\right)\right),$$
	where $T(z)=d_t z^t+\cdots$ with $d_t\neq0$.
\end{enumerate}
In particular, $\mu_{\mathrm{can}}=(2\pi)^{-1}\Delta \widehat{\Xi}$ in the sense of distributions.
\end{theorem}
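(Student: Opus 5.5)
Since Theorem~\ref{thr2} is quoted from \cite{Hagg}, the following is a reconstruction of how I would prove it. The plan is to obtain locally uniform asymptotics for $|f^{(n)}|$ off $V_S\cup S$, and then deduce everything else from standard compactness of subharmonic functions.

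\textbf{Step 1 (Leibniz expansion and degree).} Write $f^{(n)}=e^{T}P_n/B^{n+1}$ with $P_n$ a polynomial. I would check inductively that $\gcd(P_n,B)=1$, that $\deg P_n=d_n:=n(b+t-1)+a$, and that
$$\mathrm{LC}(P_n)=(t\,d_t)^n\,\mathrm{LC}(A).$$
The reason is that the top-degree contribution at each step comes from multiplying by $T'$, which has leading term $t\,d_t z^{t-1}$. Then Remark~\ref{rem:logpot_poly} gives
$$\mathcal L_{\mu_n}=\frac{1}{d_n}\log|P_n|-\frac{n\log(|d_t|t)+\log|\mathrm{LC}(A)|}{d_n}.$$
So it suffices to find the asymptotics of $\frac1n\log|P_n(z)|=\frac1n\bigl(\log|f^{(n)}e^{-T}|+(n+1)\log|B|\bigr)$.

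\textbf{Step 2 (pointwise asymptotics off the diagram).} By Leibniz,
$$f^{(n)}e^{-T}=\sum_{k=0}^n\binom nk (A/B)^{(k)}\,e^{-T}(e^T)^{(n-k)}.$$
Using partial fractions $A/B=Q+\sum_i \rho_i/(z-z_i)$ with all $\rho_i\neq0$ (the poles are simple and $\gcd(A,B)=1$), we get, for $k>\deg Q$,
$$(A/B)^{(k)}=(-1)^kk!\sum_i\rho_i(z-z_i)^{-k-1}.$$
For $e^{-T}(e^T)^{(j)}$, a Cauchy estimate on a disc of radius $\sim j^{-(t-1)/t}$ around $z$ gives a locally uniform bound $C^{\,j}(j!)^{1-1/t}$, with the crude bound $C^j$ when $t=1$. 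Hence on a compact $K\subset V_i^\circ\setminus S$, the terms with $k$ near $n$ dominate:
$$f^{(n)}e^{-T}=(-1)^n n!\,\frac{E_n(z)}{(z-z_i)^{n+1}},$$
where $E_n$ is a bounded sum whose limit is a nonvanishing analytic function. The key comparison is
$$\binom nk\,k!\,\bigl((n-k)!\bigr)^{1-1/t}\psi_S^{-k}=\frac{n!\,\psi_S^{-k}}{\bigl((n-k)!\bigr)^{1/t}},$$
which is summable in $n-k$ uniformly in $n$. The other poles $z_l$, $l\ne i$, contribute relatively $O\bigl((|z-z_i|/|z-z_l|)^{n}\bigr)\to0$. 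This yields, locally uniformly on $\mathbb C\setminus(V_S\cup S)$,
$$\frac1n\bigl(\log|P_n|-\log n!\bigr)\longrightarrow \log|B|-\log\psi_S.$$

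\textbf{Step 3 (normalization and compactness).} Since $d_n/n\to b+t-1$ and the subtracted constant is exactly $\log(n!)/d_n$, Step 2 together with Step 1 gives $\tilde{\mathcal L}_{\mu_n}\to\widehat\Xi$ pointwise, locally uniformly off $V_S\cup S$. This set has full measure. Near $S$ I would use $(z-z_i)^{n+1}f^{(n)}$ instead, which again yields the harmonic extension. The divergence in (ii) follows because $\log(n!)/d_n\to\infty$.

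The functions $\tilde{\mathcal L}_{\mu_n}$ are subharmonic, and I would show they are locally uniformly bounded above by applying the Step 2 estimates with upper bounds only, which survive on $V_S$. The standard compactness theorem for such families (as in \cite{B:H}) then upgrades a.e. convergence to $L^1_{\mathrm{loc}}$ convergence to $\widehat\Xi$, which is (iii). Taking $\frac1{2\pi}\Delta$, which is continuous in distributions, gives vague convergence to $\frac1{2\pi}\Delta\widehat\Xi=\frac{b-1}{b+t-1}\mu_S$ by Proposition~\ref{prop0}, which is (i).

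\textbf{Main obstacle.} I expect the hardest part to be the uniform bound on $e^{-T}(e^T)^{(j)}$ in Step 2. These are Hermite-type polynomials, and the $(j!)^{1-1/t}$ growth must be controlled uniformly on compacts and shown to be negligible against the $k!$ from the poles. I would handle this by the Cauchy estimate on shrinking discs described above. The cancellation on $V_S$ itself needs no treatment, because only upper bounds are used there.
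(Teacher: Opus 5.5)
Your argument is correct. The paper gives no proof of Theorem~\ref{thr2}; it only cites \cite{Hagg}, and Remark~\ref{rem:L1loc_method} says that \cite{B:H,Hagg} obtain $L^1_{\mathrm{loc}}$ convergence from explicit integral estimates on the logarithmic potentials. What you reconstruct is instead the scheme the paper uses for its own Theorems~\ref{thr5:mainm} and \ref{thr:r_positive}:
\begin{itemize}
\item a pointwise limit on the full-measure set $\mathbb C\setminus(V_S\cup S)$;
\item a locally uniform upper bound for the subharmonic functions $\tilde{\mathcal L}_{\mu_n}$;
\item Hartogs' lemma (H\"ormander, Thm.~4.1.9, as in Step~2 of Theorem~\ref{thr5:mainm}) for relative compactness and a unique cluster point;
\item finally $\frac1{2\pi}\Delta$, with positivity and $\mu_n(\mathbb C)=1$ used to pass from $C_c^\infty$ to $C_c$ test functions.
\end{itemize}
So the compactness step is not ``as in \cite{B:H}''. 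The gain over integral estimates is that nothing delicate happens on $V_S$, where only upper bounds are needed. For $t=1$ your proof is literally Theorem~\ref{thr5:mainm} with $m=1$, via Remark~\ref{rem:specialcases}. Your Step~1 (degree $n(b+t-1)+a$, leading coefficient $(t\,d_t)^n\mathrm{LC}(A)$, coprimality with $B$) and the resulting limit $\widehat\Xi$ are right.

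Two remarks on Step~2.

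First, the Cauchy disc for $e^{-T}(e^T)^{(j)}$ must have radius growing like $\rho=j^{1/t}$, not shrinking like $j^{-(t-1)/t}$. A shrinking radius only gives a bound of order $j!\,j^{j(t-1)/t}$, which is too large for your summability comparison. With $\rho=j^{1/t}$ you get $j!\,j^{-j/t}e^{C(j+1)}\le C_1^{\,j+1}(j!)^{1-1/t}$, uniformly for $z$ in a compact set, because $j^{j/t}\ge (j!)^{1/t}$.

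Second, and more importantly, the step you flag as the main obstacle disappears if you drop the Leibniz expansion and apply Cauchy's estimate to $f$ directly, as the paper does for $h$ in Lemma~\ref{lem_growth_m}.
\begin{itemize}
\item $f$ is meromorphic with simple poles at $S$ and residues $R_i=A(z_i)e^{T(z_i)}/B'(z_i)\neq0$.
\item For $z\in V_i^\circ$, write $f(w)=R_i/(w-z_i)+G(w)$ with $G$ holomorphic on a disc about $z$ of radius larger than $\psi_S(z)$.
\item This yields $(-1)^n(z-z_i)^{n+1}f^{(n)}(z)/n!\to R_i$ locally uniformly on $V_i^\circ$, including near $z_i$, which also settles your treatment near $S$.
\item The estimate $|f^{(n)}(z)|\le M\,n!\,\sigma^{-n}$ supplies the upper bounds on compacts off $S$.
\end{itemize}
This agrees with your limit, since $\rho_i e^{T(z_i)-T(z)}=R_i e^{-T(z)}$, but it needs no growth information about the Hermite-type polynomials $e^{-T}(e^T)^{(j)}$ at all.
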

\begin{proof}
See \cite{Hagg}.
\end{proof}

\section{The case $r=0$ (equivalently, $c_0\neq0$)}\label{sec4}

In this section we treat the case $r=0$ (equivalently $c_0=q(0)\neq0$) and prove Theorem~\ref{thr5:mainm}. We also record a growth estimate for $P(D)^n(h)$ (Lemma~\ref{lem_growth_m}) which does not use $c_0\neq0$ and will be reused in Section~\ref{sec5}.

Let
$$P(D)=D^m+c_{m-1}D^{m-1}+\cdots+c_0,\qquad c_0\neq0,$$
and write $q(x):=x^m+c_{m-1}x^{m-1}+\cdots+c_0$.
As before, let
$$h(z)=\frac{A(z)}{B(z)},\qquad \gcd(A,B)=1,$$
where $\deg A=a$ and where $B$ is monic of degree $b\ge2$ with pairwise distinct zeros $z_1,\dots,z_b$.

\subsection{Derivative numerators for $h=A/B$}

\begin{lemma}\label{lem:Ak}
Let $h(z)=A(z)/B(z)$ be reduced and write $B(z)=\prod_{i=1}^b (z-z_i)$ with distinct zeros.
Define polynomials $A_k$ recursively by $A_0:=A$ and
$$A_{k+1}(z):=A_k'(z)\,B(z)-(k+1)\,A_k(z)\,B'(z),\qquad k\ge0.$$
Then
$$h^{(k)}(z)=\frac{A_k(z)}{B(z)^{k+1}},\qquad k\ge0,$$
and for every $i\in\{1,\dots,b\}$ we have
$$A_k(z_i)=(-1)^k\,k!\,A(z_i)\,B'(z_i)^k\neq 0,\qquad k\ge0.$$
\end{lemma}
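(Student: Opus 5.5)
Both claims will be proved by induction on $k$, with the recursion doing almost all of the work.

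For the formula $h^{(k)}=A_k/B^{k+1}$, the case $k=0$ is the definition $h=A/B$. For the inductive step, assume $h^{(k)}=A_k/B^{k+1}$ and differentiate with the quotient rule:
$$h^{(k+1)}=\frac{A_k'B^{k+1}-(k+1)A_kB^kB'}{B^{2k+2}}=\frac{A_k'B-(k+1)A_kB'}{B^{k+2}}=\frac{A_{k+1}}{B^{k+2}}.$$
The factor $B^k$ cancels, and what remains is exactly the defining recursion for $A_{k+1}$. No reducedness is needed for this part.

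For the value at a pole, evaluate the recursion at $z=z_i$. Since $B(z_i)=0$, the term $A_k'(z_i)B(z_i)$ vanishes, leaving
$$A_{k+1}(z_i)=-(k+1)A_k(z_i)B'(z_i).$$
Iterating from $A_0(z_i)=A(z_i)$ gives $A_k(z_i)=(-1)^k k!\,A(z_i)B'(z_i)^k$, again by a one-line induction.

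Nonvanishing then follows from two facts:
\begin{itemize}
\item[(i)] $A(z_i)\neq0$, because $\gcd(A,B)=1$.
\item[(ii)] $B'(z_i)=\prod_{j\ne i}(z_i-z_j)\neq0$, because the zeros of $B$ are distinct, so $z_i$ is a simple zero.
\end{itemize}
With $k!\neq0$, the product is nonzero.

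There is no real obstacle here. The only point to watch is that the recursion is applied as a polynomial identity, so evaluating it at $z_i$ is legitimate. As a consequence, $\gcd(A_k,B)=1$ for every $k$, which is what makes the representation $A_k/B^{k+1}$ reduced.
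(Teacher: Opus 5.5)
Your proposal is correct and follows essentially the same route as the paper: an induction via the quotient rule for $h^{(k)}=A_k/B^{k+1}$, then evaluating the recursion at $z_i$ to get $A_{k+1}(z_i)=-(k+1)A_k(z_i)B'(z_i)$. Nonvanishing then comes from $A(z_i)\neq0$ (by $\gcd(A,B)=1$) and $B'(z_i)\neq0$ (simple zeros).
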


\begin{proof}
The identity for $h^{(k)}$ follows by induction using the quotient rule:
if $h^{(k)}=A_k/B^{k+1}$ then
$$h^{(k+1)}=\left(\frac{A_k}{B^{k+1}}\right)'
=\frac{A_k'B-(k+1)A_kB'}{B^{k+2}}
=\frac{A_{k+1}}{B^{k+2}}.$$
If $B(z_i)=0$ and $B'(z_i)\neq0$, then $A(z_i)\neq0$ since $\gcd(A,B)=1$.
Evaluating the recursion at $z_i$ gives
$A_{k+1}(z_i)=-(k+1)A_k(z_i)B'(z_i)$, and hence
$A_k(z_i)=(-1)^k k! A(z_i)B'(z_i)^k$.
\end{proof}

\subsection{Reduction, degree and leading coefficient}

\begin{lemma}\label{lem:Ak_degree_LC}
Let $h=A/B$ be as in Lemma~\ref{lem:Ak}, and assume additionally that $\deg A=a<b=\deg B$ (i.e., $h$ is \emph{proper}, meaning it vanishes at infinity) and that $B$ is monic.
Let $A_k$ be defined as in Lemma~\ref{lem:Ak}.
Then for every $k\ge0$,
\begin{equation}\label{eq:Ak_deg}
\deg(A_k)=a+k(b-1),
\end{equation}
and the leading coefficients satisfy
\begin{equation}\label{eq:Ak_LC}
\mathrm{LC}(A_k)=\mathrm{LC}(A)\,(a-b)_k.
\end{equation}
\end{lemma}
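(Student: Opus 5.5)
We argue by induction on $k$, tracking the top-degree term of $A_k$ through the recursion $A_{k+1}=A_k'B-(k+1)A_kB'$ of Lemma~\ref{lem:Ak}. The case $k=0$ is immediate: $A_0=A$ has degree $a$ and leading coefficient $\mathrm{LC}(A)=\mathrm{LC}(A)\,(a-b)_0$.

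For the inductive step, suppose $\deg A_k=d_k:=a+k(b-1)$ and $\mathrm{LC}(A_k)=\ell_k:=\mathrm{LC}(A)(a-b)_k$. If $d_k\ge1$, then $A_k'B$ has degree at most $d_k-1+b$, with coefficient $d_k\ell_k$ in degree $d_k+b-1$, since $B$ is monic. The term $(k+1)A_kB'$ also has degree at most $d_k+b-1$, with coefficient $(k+1)b\,\ell_k$ there. (If $d_k=0$ then $A_k'=0$, and the formula below still holds with $d_k=0$.) Hence the coefficient of $z^{d_k+b-1}$ in $A_{k+1}$ is
$$\ell_k\bigl(d_k-(k+1)b\bigr)=\ell_k\bigl(a+k(b-1)-kb-b\bigr)=\ell_k\,(a-b-k).$$
By definition of the falling factorial, $\ell_k(a-b-k)=\mathrm{LC}(A)(a-b)_k(a-b-k)=\mathrm{LC}(A)(a-b)_{k+1}$. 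This is the claimed leading coefficient, provided it is nonzero, and then $\deg A_{k+1}=d_k+b-1=a+(k+1)(b-1)$.

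The only real point to check is nonvanishing, which is where properness enters. Since $a<b$, we have $a-b-k\le -1-k<0$ for every $k\ge0$. Hence $(a-b)_{k+1}=\prod_{j=0}^{k}(a-b-j)$ is a product of negative integers and is nonzero. Moreover $\mathrm{LC}(A)\neq0$, because $A\neq0$ follows from $\gcd(A,B)=1$ with $b\ge1$. Therefore no cancellation occurs at the top degree and the induction closes. (Without properness, cancellation occurs when $a-b$ is a nonnegative integer $\le k$, which explains the hypothesis.)
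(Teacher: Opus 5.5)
Your proof is correct and follows the paper's argument: induction on $k$, reading off the coefficient of $z^{d_k+b-1}$ in $A_k'B-(k+1)A_kB'$ as $(a-b-k)\,\mathrm{LC}(A_k)$, and using $a<b$ to rule out cancellation at the top degree. Your extra remarks on the case $d_k=0$ (where $A_k'=0$) and on $\mathrm{LC}(A)\neq0$ are small points that the paper leaves implicit.
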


\begin{proof}
We argue by induction.
For $k=0$ the claims are trivial.
Assume \eqref{eq:Ak_deg} and \eqref{eq:Ak_LC} hold for some $k$ and set $d_k:=\deg(A_k)=a+k(b-1)$ and $L_k:=\mathrm{LC}(A_k)$.
Since $B$ is monic, the leading term of $A_k'B$ is $d_kL_k\,z^{d_k+b-1}$, while the leading term of $(k+1)A_kB'$ is $(k+1)bL_k\,z^{d_k+b-1}$.
Hence the leading coefficient of
$$A_{k+1}=A_k'B-(k+1)A_kB'$$
equals $(d_k-(k+1)b)L_k=(a-b-k)L_k$.
Because $a<b$, we have $a-b-k\neq0$ for all $k\ge0$, so there is no cancellation at the top degree and therefore
$\deg(A_{k+1})=d_k+b-1=a+(k+1)(b-1)$.
Moreover,
$L_{k+1}=(a-b-k)L_k$, which yields \eqref{eq:Ak_LC}.
\end{proof}

\begin{lemma}\label{lem_deg_Pm}
Let $h=A/B$ be reduced with $\deg A=a$, and let $B$ be monic of degree $b\ge1$ with simple zeros $z_1,\dots,z_b$.
Let $P(D)=\sum_{j=0}^{m}c_jD^j$ be a monic constant-coefficient differential operator with symbol $q(x)=\sum_{j=0}^{m}c_jx^j$.
Set $r:=\operatorname{ord}_0 q$ and let $c_r$ be the first nonzero coefficient of $q$.
Assume either $r=0$, or $r>0$ and $h$ is proper (i.e.\ $\deg A=a<b$).
(When $r>0$, this causes no loss of generality for the large-$n$ asymptotics, since the polynomial part of $h$ is annihilated by $P(D)^n$ for all sufficiently large $n$; see Lemma~\ref{lem:kill_poly_part}.)
Write
$$q(x)^n=\sum_{k=0}^{mn} p_{n,k}\,x^k,$$
let $A_k$ be as in Lemma~\ref{lem:Ak}, and define
$$\widetilde A_n(z):=\sum_{k=0}^{mn} p_{n,k}\,A_k(z)\,B(z)^{mn-k}.$$
Then
$$P(D)^n(h(z))=\frac{\widetilde A_n(z)}{B(z)^{mn+1}},\qquad \gcd(\widetilde A_n,B)=1.$$
Moreover,
$$\deg(\widetilde A_n)=a+n(bm-r)$$
and
$$\mathrm{LC}(\widetilde A_n)=c_r^{\,n}\,\mathrm{LC}(A)\,(a-b)_{rn}.$$
\end{lemma}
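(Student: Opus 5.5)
The plan is to expand $P(D)^n=q(D)^n=\sum_{k=0}^{mn}p_{n,k}D^k$, apply it termwise to $h$, and then read off everything from a single dominant term: the $k=mn$ term at the poles, and the lowest-index term $k=rn$ at infinity. Since $P(D)$ has constant coefficients, $P(D)^n$ is the operator with symbol $q(x)^n$. Using Lemma~\ref{lem:Ak},
$$P(D)^n h=\sum_{k=0}^{mn}p_{n,k}\frac{A_k}{B^{k+1}}=\frac{1}{B^{mn+1}}\sum_{k=0}^{mn}p_{n,k}A_kB^{mn-k}=\frac{\widetilde A_n}{B^{mn+1}}.$$
This gives the representation directly.

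\emph{Coprimality.} Evaluate $\widetilde A_n$ at a zero $z_i$ of $B$. Every term with $k<mn$ contains a factor $B(z_i)=0$, so only the term $k=mn$ survives. Since $q$ is monic of degree $m$, $q^n$ is monic of degree $mn$, so $p_{n,mn}=1$. Hence
$$\widetilde A_n(z_i)=A_{mn}(z_i)=(-1)^{mn}(mn)!\,A(z_i)B'(z_i)^{mn}\neq0$$
by Lemma~\ref{lem:Ak}. As the zeros of $B$ are simple and none is a zero of $\widetilde A_n$, we get $\gcd(\widetilde A_n,B)=1$.

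\emph{Degree and leading coefficient.} First I would record a general bound from the recursion: $\deg A_{k+1}\le \deg A_k+b-1$, hence $\deg A_k\le a+k(b-1)$ for every $k$, with equality in the proper case by Lemma~\ref{lem:Ak_degree_LC}. So the $k$-th summand of $\widetilde A_n$ has degree at most $a+k(b-1)+b(mn-k)=a+bmn-k$. The nonzero coefficients $p_{n,k}$ occur only for $k\ge rn$, because $q^n=x^{rn}\widehat q^{\,n}$, and the lowest one is $p_{n,rn}=c_r^n\neq0$. All summands with $k>rn$ have degree at most $a+bmn-k<a+bmn-rn$, so the $k=rn$ summand alone determines the top degree, provided it actually attains degree $a+bmn-rn$. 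I treat the two cases of the hypothesis separately.
\begin{itemize}
\item If $r=0$, the $k=0$ summand is $c_0^nAB^{mn}$. It has exact degree $a+bmn$ and leading coefficient $c_0^n\,\mathrm{LC}(A)$, which equals $c_0^n\,\mathrm{LC}(A)(a-b)_0$. Properness is not needed here.
\item If $r>0$ and $h$ is proper, Lemma~\ref{lem:Ak_degree_LC} gives $\deg A_{rn}=a+rn(b-1)$ exactly, with $\mathrm{LC}(A_{rn})=\mathrm{LC}(A)(a-b)_{rn}$. This is nonzero because $a<b$. Since $B$ is monic, the $k=rn$ summand has degree $a+n(bm-r)$ and leading coefficient $c_r^n\,\mathrm{LC}(A)(a-b)_{rn}$.
\end{itemize}
No cancellation can occur, since all other summands have strictly smaller degree. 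This proves the degree and leading-coefficient formulas.

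The only delicate point is the case $r>0$ with $h$ not proper. There $(a-b)_{rn}$ can vanish and the top-degree term can cancel, which is exactly why the statement assumes properness in that case. For $r=0$ the cruder degree bound on $A_k$ suffices, so no properness is needed; I would make sure that bound is stated for arbitrary $a$ and not only for proper $h$.
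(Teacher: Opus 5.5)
Your proposal is correct and follows the paper's proof essentially step by step. You expand $P(D)^n=\sum_k p_{n,k}D^k$, evaluate at the poles using $p_{n,mn}=1$ and Lemma~\ref{lem:Ak}, and use the crude bound $\deg A_k\le a+k(b-1)$ to isolate the $k=rn$ summand, with Lemma~\ref{lem:Ak_degree_LC} supplying its exact degree and leading coefficient in the proper case. Your remarks that properness is not needed when $r=0$, and that $(a-b)_{rn}$ may vanish when $h$ is not proper, match the role the hypotheses play in the paper.
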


\begin{proof}
Write $P(D)^n=q(D)^n=\sum_{k=0}^{mn}p_{n,k}D^k$.
Lemma~\ref{lem:Ak} gives $D^k h=A_k/B^{k+1}$, and bringing to the denominator $B^{mn+1}$ yields the stated representation.

If $z_i$ is a zero of $B$, then $B(z_i)=0$ forces
$$\widetilde A_n(z_i)=p_{n,mn}A_{mn}(z_i).$$
Since $q$ is monic we have $p_{n,mn}=1$, and Lemma~\ref{lem:Ak} gives $A_{mn}(z_i)\neq0$.
Hence $\widetilde A_n(z_i)\neq0$, so $\gcd(\widetilde A_n,B)=1$.

Set $k_0:=rn$ (so $k_0=0$ when $r=0$). Then $p_{n,k}=0$ for $k<k_0$ and $p_{n,k_0}=c_r^{\,n}\neq0$.
A direct induction from $A_{k+1}=A_k'B-(k+1)A_kB'$ gives $\deg(A_k)\le a+k(b-1)$, and thus
$$\deg\bigl(A_kB^{mn-k}\bigr)\le a+k(b-1)+b(mn-k)=a+bmn-k.$$
Therefore the maximal degree in the sum defining $\widetilde A_n$ is attained at the smallest $k$ with $p_{n,k}\neq0$, namely $k=k_0$.
If $r=0$ this gives $\deg(\widetilde A_n)=a+bmn$.
If $r>0$, then $h$ is proper and Lemma~\ref{lem:Ak_degree_LC} gives $\deg(A_{k_0})=a+k_0(b-1)$, so the $k=k_0$ term has degree $a+bmn-k_0=a+n(bm-r)$ and all terms with $k>k_0$ have strictly smaller degree.
This proves $\deg(\widetilde A_n)=a+n(bm-r)$.

The same degree comparison shows that the top-degree coefficient comes only from the term with $k=k_0$.
Since $B$ is monic, this yields $\mathrm{LC}(\widetilde A_n)=p_{n,0}\mathrm{LC}(A)=c_0^{\,n}\mathrm{LC}(A)$ when $r=0$ (using $(a-b)_0=1$), while for $r>0$ we have $\mathrm{LC}(A_{k_0})=\mathrm{LC}(A)\,(a-b)_{k_0}$ (Lemma~\ref{lem:Ak_degree_LC}), hence
$\mathrm{LC}(\widetilde A_n)=p_{n,k_0}\mathrm{LC}(A_{k_0})=c_r^{\,n}\mathrm{LC}(A)(a-b)_{rn}$.
\end{proof}

\subsection{A growth estimate for $P(D)^n(h)$}

Let $S:=\{z_1,\dots,z_b\}$ denote the pole set, and recall that $\psi_S(z)=\min_{1\le i\le b}|z-z_i|$ (see \eqref{psi_def}).

\begin{lemma}\label{lem_growth_m}
Let $h=A/B$ be as above and let $P(D)=D^m+c_{m-1}D^{m-1}+\cdots+c_0$.
Then for any $z\in\mathbb C\setminus S$,
$$\limsup_{n\to\infty}\left|\frac{P(D)^n\!\left(h(z)\right)}{(mn)!}\right|^{1/n}\le \psi_S(z)^{-m}.$$
Moreover, if the nearest pole to $z$ is unique, say $\psi_S(z)=|z-z_{i_0}|$, then with $\alpha_{i_0}:=\operatorname{Res}(h,z_{i_0})=A(z_{i_0})/B'(z_{i_0})$ we have the sharper asymptotic
$$\lim_{n\to\infty}(-1)^{mn}(z-z_{i_0})^{mn+1}\,\frac{P(D)^n\!\left(h(z)\right)}{(mn)!}
=\alpha_{i_0}\exp\!\left(-\frac{c_{m-1}}{m}(z-z_{i_0})\right),$$
and in particular
$$\lim_{n\to\infty}\left|\frac{P(D)^n\!\left(h(z)\right)}{(mn)!}\right|^{1/n}= \psi_S(z)^{-m}.$$
\end{lemma}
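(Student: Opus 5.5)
The plan is to pass to the partial fraction decomposition and handle each simple pole exactly. Write $h=Q+\sum_{i=1}^b \alpha_i/(z-z_i)$ with $Q$ a polynomial and $\alpha_i=\operatorname{Res}(h,z_i)=A(z_i)/B'(z_i)$, which is nonzero by reducedness.

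For the polynomial part, $P(D)$ preserves the finite-dimensional space of polynomials of degree $\le\deg Q$. Hence $P(D)^nQ$ is a polynomial of bounded degree whose coefficients are $O(C^n)$ for some constant $C$. After division by $(mn)!$, its $n$-th root tends to $0$, so it contributes nothing to either statement.

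For a single pole $w$, use $D^k(z-w)^{-1}=(-1)^kk!\,(z-w)^{-k-1}$ together with $q(D)^n=\sum_k p_{n,k}D^k$. Substituting $j=mn-k$ gives the exact identity
$$(-1)^{mn}(z-w)^{mn+1}\frac{P(D)^n\bigl((z-w)^{-1}\bigr)}{(mn)!}=\sum_{j=0}^{mn} p_{n,mn-j}\,\frac{(mn-j)!}{(mn)!}\,\bigl(-(z-w)\bigr)^j .$$
Here $p_{n,mn-j}$ is the coefficient of $y^j$ in $\bigl(1+c_{m-1}y+\cdots+c_0y^m\bigr)^n$, where $y=1/x$.

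The key steps, in order, are as follows.

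First, I will establish a uniform majorant. Choose $K$ large enough that $1+|c_{m-1}|y+\cdots+|c_0|y^m$ is coefficientwise dominated by $(1+Ky)^m$. Then $|p_{n,mn-j}|\le\binom{mn}{j}K^j$. Since $\binom{mn}{j}\frac{(mn-j)!}{(mn)!}=1/j!$, the $j$-th term is bounded by $(K|z-w|)^j/j!$, independently of $n$. This immediately yields
$$\bigl|P(D)^n\bigl((z-w)^{-1}\bigr)\bigr|\le (mn)!\,|z-w|^{-mn-1}e^{K|z-w|}.$$
Summing over the poles, the first (limsup) assertion of Lemma~\ref{lem_growth_m} follows, since $|z-z_i|\ge\psi_S(z)$ for every $i$.

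Second, I will compute the termwise limit for fixed $j$. Only choices using $y$-linear factors contribute to the top order in $n$, so $p_{n,mn-j}=\binom{n}{j}c_{m-1}^j+O(n^{j-1})$, while $\frac{(mn-j)!}{(mn)!}=(mn)^{-j}(1+o(1))$. Hence the $j$-th term tends to $\bigl(-c_{m-1}(z-w)/m\bigr)^j/j!$. By the majorant from the first step and Tannery's theorem (dominated convergence for series), the whole sum converges to $\exp\bigl(-\tfrac{c_{m-1}}{m}(z-w)\bigr)$.

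Third, suppose the nearest pole $z_{i_0}$ is unique. Multiply $P(D)^n(h)/(mn)!$ by $(-1)^{mn}(z-z_{i_0})^{mn+1}$.
\begin{itemize}
\item The $i_0$ term converges to $\alpha_{i_0}e^{-c_{m-1}(z-z_{i_0})/m}$ by the second step.
\item For $i\ne i_0$, the first step shows the $i$-th term is $O\bigl((|z-z_{i_0}|/|z-z_i|)^{mn}\bigr)\to0$.
\item The polynomial part is negligible, as noted above.
\end{itemize}
This gives the sharp asymptotic. Because the limit $\alpha_{i_0}e^{(\cdots)}$ is nonzero, taking $n$-th roots gives the equality $\lim|P(D)^n(h)/(mn)!|^{1/n}=\psi_S(z)^{-m}$.

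The main obstacle is the uniform majorant in the first step. It is what justifies interchanging the limit with a sum whose length grows with $n$, and it is also the source of the limsup bound. The coefficientwise domination by $(1+Ky)^m$ is the cleanest route I see. The precise asymptotics of $p_{n,mn-j}$ are routine once this domination is in place.
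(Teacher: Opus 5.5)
Your proof is correct, but it takes a genuinely different route from the paper's.

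The paper never uses the global partial-fraction decomposition. For the limsup bound it applies Cauchy's estimate $|h^{(k)}(z)|\le M_\sigma\,k!\,\sigma^{-k}$ on a disc of radius $\sigma<\psi_S(z)$ and then lets $\sigma\uparrow\psi_S(z)$. For the sharp asymptotic it splits off only the nearest principal part, $h=\alpha_{i_0}/(w-z_{i_0})+g$. The remainder $g$ absorbs the other poles and the polynomial part, and the paper controls it by a Cauchy estimate on the slightly larger disc of radius $\psi_S(z)+\eta$.

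The domination also differs. The paper uses the multinomial bound of Lemma~\ref{lemDefect}, $|p_{n,mn-\ell}|\frac{(mn-\ell)!}{(mn)!}\le[t^\ell]\exp\bigl(\sum_{j}\frac{|c_{m-j}|}{m}t^j\bigr)$. You get the cruder but equally sufficient bound $K^j/j!$ from the coefficientwise comparison with $(1+Ky)^{mn}$ and the identity $\binom{mn}{j}\frac{(mn-j)!}{(mn)!}=\frac{1}{j!}$.

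Your route exploits rationality, and this buys explicitness:
\begin{itemize}
\item The exact per-pole identity gives the $n$-uniform bound $|P(D)^n((z-w)^{-1})|\le (mn)!\,|z-w|^{-mn-1}e^{K|z-w|}$. The limsup bound therefore holds with $\psi_S(z)$ directly, with no $\sigma\uparrow\psi_S$ or $\eta$ step.
\item The other poles' contributions decay at the explicit rate $(\psi_S(z)/|z-z_i|)^{mn}$.
\item The polynomial part is disposed of by a finite-dimensional argument.
\end{itemize}

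The paper's Cauchy-estimate route is local. It only needs $h$ holomorphic on a disc around $z$, with a simple pole as the unique nearest singularity, so it applies unchanged to non-rational functions. It also transfers directly to the uniform bounds on the compact sets $K_\varepsilon$ in Step~2 of Theorem~\ref{thr5:mainm}. Your bounds supply those too, since $e^{K|z-z_i|}$ and the polynomial contribution are locally bounded.
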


\begin{proof}
Write $q(x)^n=\sum_{k=0}^{mn}p_{n,k}x^k$, so that $P(D)^n=\sum_{k=0}^{mn}p_{n,k}D^k$.
Fix $z\in\mathbb C\setminus S$ and choose $0<\sigma<\psi_S(z)$.
Since $h$ is holomorphic on $\overline{\mathbb{D}(z,\sigma)}$, Cauchy's estimate gives
$$|h^{(k)}(z)|\le M_\sigma\,k!\,\sigma^{-k}\qquad (k\ge0),$$
where $M_\sigma:=\max_{|w-z|=\sigma}|h(w)|$.
Hence
\begin{equation}\label{eqPm_upper0}
\left|\frac{P(D)^n(h(z))}{(mn)!}\right|
\le M_\sigma\sum_{k=0}^{mn}|p_{n,k}|\frac{k!}{(mn)!}\,\sigma^{-k}.
\end{equation}

Reindex the sum by $k=mn-\ell$:
$$\sum_{k=0}^{mn}|p_{n,k}|\frac{k!}{(mn)!}\,\sigma^{-k}
=\sigma^{-mn}\sum_{\ell=0}^{mn}|p_{n,mn-\ell}|\frac{(mn-\ell)!}{(mn)!}\,\sigma^{\ell}.$$
By the coefficient bound in Lemma~\ref{lemDefect} (Appendix~\ref{app:coefficients}; here $[t^\ell]$ denotes the coefficient of $t^\ell$),
$$|p_{n,mn-\ell}|\frac{(mn-\ell)!}{(mn)!}\le [t^\ell]E(t),$$
where
$$E(t):=\exp\!\left(\frac{|c_{m-1}|}{m}t+\frac{|c_{m-2}|}{m}t^2+\cdots+\frac{|c_0|}{m}t^m\right).$$
Hence $\sum_{\ell\ge0}[t^\ell]E(t)\,\sigma^\ell=E(\sigma)$, so
$$\sum_{\ell=0}^{mn}|p_{n,mn-\ell}|\frac{(mn-\ell)!}{(mn)!}\,\sigma^{\ell}\le E(\sigma).$$
Combining with \eqref{eqPm_upper0} yields
$$\left|\frac{P(D)^n(h(z))}{(mn)!}\right|
\le M_\sigma\,\sigma^{-mn}E(\sigma).$$
Taking $n$th roots and letting $n\to\infty$ gives
$$\limsup_{n\to\infty}\left|\frac{P(D)^n(h(z))}{(mn)!}\right|^{1/n}\le \sigma^{-m}.$$
Finally, let $\sigma\uparrow\psi_S(z)$ to obtain the desired upper bound.

Now suppose that $z$ lies in the interior of the Voronoi cell of some $z_{i_0}\in S$, so that the nearest pole is unique: $\psi_S(z)=|z-z_{i_0}|<|z-z_j|$ for all $j\neq i_0$.
Write
$$h(w)=\frac{\alpha_{i_0}}{w-z_{i_0}}+g(w),$$
where $\alpha_{i_0}=\operatorname{Res}(h,z_{i_0})=A(z_{i_0})/B'(z_{i_0})\neq0$ and $g$ is holomorphic in a neighborhood of $\overline{\mathbb{D}(z,\psi_S(z)+\eta)}$ for some $\eta>0$ chosen so that $\overline{\mathbb{D}(z,\psi_S(z)+\eta)}$ contains no poles other than $z_{i_0}$; such an $\eta$ exists because uniqueness of the nearest pole implies $\min_{i\ne i_0}|z-z_i|>\psi_S(z)$.
Repeating the Cauchy estimate argument from the first part (now applied to $g$, with radius $\psi_S(z)+\eta$) gives
$$\limsup_{n\to\infty}\left|\frac{P(D)^n(g(z))}{(mn)!}\right|^{1/n}\le (\psi_S(z)+\eta)^{-m}<\psi_S(z)^{-m}.$$
It remains to analyze the principal part. Write $w-z_{i_0}=\zeta$ and define
$$u_n(\zeta):=\frac{1}{(mn)!}P(D)^n(\zeta^{-1}),$$
where we view $P(D)$ as acting on functions of $\zeta$ (since $d/dw=d/d\zeta$).
Since $D^k(\zeta^{-1})=(-1)^k k!\,\zeta^{-k-1}$ and $P(D)^n=\sum_{k=0}^{mn}p_{n,k}D^k$, we obtain
\begin{align*}
u_n(\zeta)
&=\frac{1}{(mn)!}\sum_{k=0}^{mn}p_{n,k}(-1)^k k!\,\zeta^{-k-1}\\
&=(-1)^{mn}\zeta^{-mn-1}\sum_{\ell=0}^{mn}(-1)^{\ell}p_{n,mn-\ell}\frac{(mn-\ell)!}{(mn)!}\,\zeta^{\ell}.
\end{align*}
Define
$$R_n(\zeta):=\sum_{\ell=0}^{mn}(-1)^{\ell}p_{n,mn-\ell}\frac{(mn-\ell)!}{(mn)!}\,\zeta^{\ell},$$
so that $u_n(\zeta)=(-1)^{mn}\zeta^{-mn-1}R_n(\zeta)$.

For each fixed $\ell$, Lemma~\ref{lemDefect} gives
$$a_{n,\ell}:=(-1)^{\ell}p_{n,mn-\ell}\frac{(mn-\ell)!}{(mn)!}\longrightarrow
\frac{1}{\ell!}\left(-\frac{c_{m-1}}{m}\right)^{\ell}.$$
Moreover, $|a_{n,\ell}|\le [t^{\ell}]E(t)$ (interpreting $a_{n,\ell}=0$ when $\ell>mn$), and
$\sum_{\ell\ge0}[t^{\ell}]E(t)\,R^{\ell}=E(R)<\infty$ for every $R>0$.
Therefore termwise convergence and uniform domination on $|\zeta|\le R$ imply the locally uniform limit
$$R_n(\zeta)=\sum_{\ell=0}^{mn}a_{n,\ell}\,\zeta^{\ell}\longrightarrow
\sum_{\ell\ge0}\frac{1}{\ell!}\left(-\frac{c_{m-1}}{m}\zeta\right)^{\ell}
=\exp\!\left(-\frac{c_{m-1}}{m}\zeta\right).$$
In particular, for our fixed $\zeta\neq0$ we have $R_n(\zeta)\to\exp\!\left(-\frac{c_{m-1}}{m}\zeta\right)$, and therefore
$$\lim_{n\to\infty}(-1)^{mn}\zeta^{mn+1}\,u_n(\zeta)
=\lim_{n\to\infty}R_n(\zeta)=\exp\!\left(-\frac{c_{m-1}}{m}\zeta\right).$$
Equivalently,
$$\lim_{n\to\infty}(-1)^{mn}\zeta^{mn+1}\,\frac{P(D)^n(\zeta^{-1})}{(mn)!}
=\exp\!\left(-\frac{c_{m-1}}{m}\zeta\right).$$

Since $P(D)^n(h(z))=\alpha_{i_0}P(D)^n(\zeta^{-1})+P(D)^n(g(z))$, it follows that
$$\lim_{n\to\infty}(-1)^{mn}\zeta^{mn+1}\,\frac{\alpha_{i_0}P(D)^n(\zeta^{-1})}{(mn)!}
=\alpha_{i_0}\exp\!\left(-\frac{c_{m-1}}{m}\zeta\right).$$
Moreover, the estimate for $g$ gives
$$\limsup_{n\to\infty}\left|\zeta^{mn+1}\frac{P(D)^n(g(z))}{(mn)!}\right|^{1/n}
\le \left(\frac{|\zeta|}{|\zeta|+\eta}\right)^m<1,$$
hence $\zeta^{mn+1}P(D)^n(g(z))/(mn)!\to0$.
Therefore
$$\lim_{n\to\infty}(-1)^{mn}\zeta^{mn+1}\,\frac{P(D)^n(h(z))}{(mn)!}
=\alpha_{i_0}\exp\!\left(-\frac{c_{m-1}}{m}\zeta\right).$$
Taking $n$th roots yields $\lim_{n\to\infty}\left|\frac{P(D)^n(h(z))}{(mn)!}\right|^{1/n}=|\zeta|^{-m}=\psi_S(z)^{-m}$.
\end{proof}

\smallskip
\begin{remark}
The pointwise asymptotic in Lemma~\ref{lem_growth_m} depends on $P(D)$ only through $c_{m-1}$: after dividing by $(mn)!$, the contributions involving $c_{m-2},\dots,c_0$ are subexponential in $n$ and hence do not affect the limit.
\end{remark}

\subsection{Main theorem ($c_0\neq0$)}

\begin{theorem}\label{thr5:mainm}
Let $h(z)=A(z)/B(z)$ be a reduced rational function, where $B$ is monic of degree $b\ge 2$ with distinct zeros $z_1,\dots,z_b$.
Set $S:=\{z_1,\dots,z_b\}$ and let $V:=V_S$ denote the Voronoi diagram of $S$ (Subsection~\ref{sub:sec2}).
Let
$$P(D)=D^m+c_{m-1}D^{m-1}+\cdots+c_0,\qquad m\ge1,\quad c_0\neq0,$$
and write
$$P(D)^n(h(z))=\frac{\widetilde A_n(z)}{B(z)^{mn+1}},$$
as in Lemma~\ref{lem_deg_Pm}.
Set
$$d_n:=\deg(\widetilde A_n)=a+bmn,\qquad a:=\deg A,$$
let $\mu_n:=\mu_{\widetilde A_n}$ be the zero-counting measure (Definition~\ref{def:zcmeasure}), and define
$$\widehat{\mathcal L}_{\mu_n}(z):=\mathcal L_{\mu_n}(z)-\frac{\log((mn)!)}{d_n}.$$
Then, as $n\to\infty$:
\begin{enumerate}
\item[(1)] $\widehat{\mathcal L}_{\mu_n}\to \Theta_{m,0}$ in $L^1_{\mathrm{loc}}(\mathbb C)$, where
$$\Theta_{m,0}(z)=\frac{b-1}{b}\,\Xi(z)-\frac{1}{mb}\log|c_0|,$$
and $\Xi$ is the B\o gvad--H\"agg potential from \eqref{defXi}.
\item[(2)] For every $z\in\mathbb C\setminus(V\cup S)$ we have $\mathcal L_{\mu_n}(z)\to+\infty$.
\item[(3)] $\mu_n$ converges vaguely on $\mathbb C$ to the canonical subprobability measure
$$\mu_{c,0}=\frac{b-1}{b}\,\mu_S,$$
supported on $V$; in particular $\mu_{c,0}(\mathbb C)=(b-1)/b$.
\end{enumerate}
\end{theorem}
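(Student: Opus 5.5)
Using Remark~\ref{rem:logpot_poly} and Lemma~\ref{lem_deg_Pm} (with $r=0$, so $\mathrm{LC}(\widetilde A_n)=c_0^n\mathrm{LC}(A)$), we write
\[
\widehat{\mathcal L}_{\mu_n}(z)=\frac{1}{d_n}\log\left|\frac{P(D)^n(h(z))}{(mn)!}\right|+\frac{mn+1}{d_n}\log|B(z)|-\frac{n\log|c_0|+\log|\mathrm{LC}(A)|}{d_n}.
\]
Since $d_n=a+bmn$, we have $n/d_n\to 1/(bm)$ and $(mn+1)/d_n\to 1/b$. For $z\notin V\cup S$, Lemma~\ref{lem_growth_m} gives $\frac1n\log|P(D)^n h(z)/(mn)!|\to -m\log\psi_S(z)$. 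Hence
\[
\widehat{\mathcal L}_{\mu_n}(z)\to \frac1b\bigl(-\log\psi_S(z)+\log|B(z)|\bigr)-\frac{1}{mb}\log|c_0|=\frac{b-1}{b}\Xi(z)-\frac{1}{mb}\log|c_0|=\Theta_{m,0}(z)
\]
pointwise on $\mathbb C\setminus(V\cup S)$, a set of full Lebesgue measure. Part (2) follows at once, because $\log((mn)!)/d_n\sim (1/b)\log n\to+\infty$ while $\widehat{\mathcal L}_{\mu_n}(z)$ stays bounded.

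To upgrade pointwise convergence to $L^1_{\mathrm{loc}}$ convergence, we use the standard compactness theorem for subharmonic functions (e.g.\ \cite{S.E.SR}). The functions $\widehat{\mathcal L}_{\mu_n}$ are subharmonic, being logarithmic potentials of probability measures shifted by constants. We then need a local uniform upper bound. Pick $\sigma$ with $0<\sigma<\psi_S(z)$ uniformly for $z$ in a compact set $K\subset\mathbb C\setminus S$. The Cauchy estimate in the proof of Lemma~\ref{lem_growth_m} gives $|P(D)^nh(z)/(mn)!|\le M\sigma^{-mn}E(\sigma)$ uniformly on $K$, so $\widehat{\mathcal L}_{\mu_n}$ is uniformly bounded above on $K$. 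Near a pole we argue differently: the maximum principle applied to the subharmonic $\widehat{\mathcal L}_{\mu_n}$ on a small circle around $z_i$ bounds it inside the disc. The sequence also does not tend to $-\infty$ locally uniformly, since it converges pointwise to a finite limit at some point. Therefore every subsequence has a further subsequence converging in $L^1_{\mathrm{loc}}$ to some subharmonic $u$. The $L^1_{\mathrm{loc}}$ limit $u$ satisfies $u\ge\limsup\widehat{\mathcal L}_{\mu_n}$ everywhere, with equality almost everywhere. The pointwise limit already identifies $u=\Theta_{m,0}$ almost everywhere, and both functions are subharmonic with $\Theta_{m,0}$ continuous, so $u=\Theta_{m,0}$. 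This proves (1) for the whole sequence.

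The main point of care is this $L^1_{\mathrm{loc}}$ step. Pointwise a.e.\ convergence together with a uniform upper bound does not by itself give $L^1$ convergence; one needs the compactness theorem and the identification of the limit. If necessary we would justify the identification more carefully by Fatou/Hartogs-type arguments: the $L^1_{\mathrm{loc}}$ limit $u$ agrees with $\limsup\widehat{\mathcal L}_{\mu_n}$ almost everywhere, and that $\limsup$ equals $\Theta_{m,0}$ on the full-measure set $\mathbb C\setminus(V\cup S)$.

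Finally, (3) follows from (1). The distributional Laplacian is continuous under $L^1_{\mathrm{loc}}$ convergence, and subtracting constants does not change it, so $\mu_n=\frac{1}{2\pi}\Delta\widehat{\mathcal L}_{\mu_n}\to\frac{1}{2\pi}\Delta\Theta_{m,0}=\frac{b-1}{b}\mu_S$ as distributions. This holds by Proposition~\ref{prop0}. Since the $\mu_n$ are positive measures of mass $1$, distributional convergence implies vague convergence. The limit is supported on $V$ and has total mass $(b-1)/b$.
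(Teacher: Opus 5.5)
Your proposal is correct and follows the paper's proof essentially step for step. It uses the same formula for $\widehat{\mathcal L}_{\mu_n}$, pointwise convergence off $V\cup S$ via Lemma~\ref{lem_growth_m}, a locally uniform upper bound from the Cauchy estimate plus the maximum principle on small discs around the poles, Hartogs-type $L^1_{\mathrm{loc}}$ compactness with the limit identified by the pointwise limit, and distributional Laplacians for (3). Two of your details are simply spelled out where the paper leaves them implicit: the check that the sequence does not tend to $-\infty$ locally uniformly, and the identification of the cluster point through $\limsup$ with a.e.\ equality.
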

The illustration of part $(3)$ of Theorem \ref{thr5:mainm} is shown in Figures \ref{f3}.
\begin{figure}[!hbt]
\centering
\includegraphics[width=0.65\linewidth]{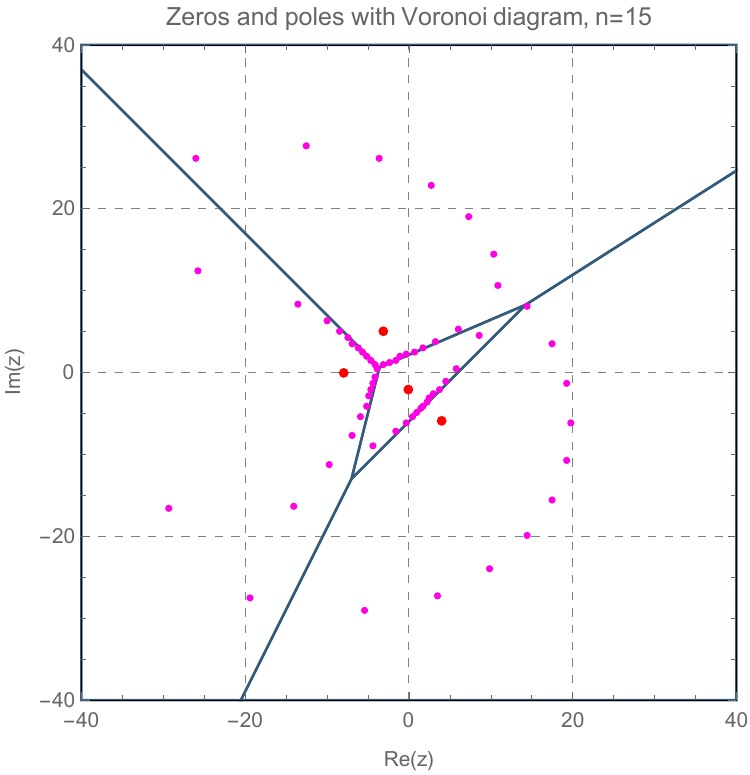}\hskip0.3cm
\caption{\footnotesize{The Voronoi diagram generated by the zeros of the polynomial
 $h(z)=(z+3-5i)(z-4+6i)(z+8)(z+2i)$ together with the zeros of $\left(P(D)\right)^{n}\left(1/h(z)\right)$ when
$ n=15$. Note that the zeros of $h$ are the poles of $1/h$ (red dots).}}\label{f3}
\end{figure}

\begin{proof}
We proceed in three steps.

\smallskip
\noindent\textbf{Step 1: pointwise convergence.}
Fix $z\in\mathbb C\setminus(V\cup S)$; recall $\psi_S(z)=\min_{1\le i\le b}|z-z_i|$.
Since $z\notin V$, the nearest pole is unique.
Thus Lemma~\ref{lem_growth_m} (second part) yields
$$\lim_{n\to\infty}\bigl|(mn)!^{-1}P(D)^n(h(z))\bigr|^{1/n}=\psi_S(z)^{-m}.$$
In particular, the limit is positive, so $P(D)^n(h(z))\neq0$ for all sufficiently large $n$.
By Lemma~\ref{lem_deg_Pm} we have
$$P(D)^n(h(z))=\frac{\widetilde A_n(z)}{B(z)^{mn+1}},$$
so for $z\notin S=\{z_1,\dots,z_b\}$,
$$\log|\widetilde A_n(z)|=\log|P(D)^n(h(z))|+(mn+1)\log|B(z)|.$$
By Remark~\ref{rem:logpot_poly},
$$\mathcal L_{\mu_n}(z)=\frac{1}{d_n}\log|\widetilde A_n(z)|-\frac{1}{d_n}\log\bigl|\mathrm{LC}(\widetilde A_n)\bigr|.$$
Lemma~\ref{lem_deg_Pm} gives $\mathrm{LC}(\widetilde A_n)=c_0^{\,n}\mathrm{LC}(A)$, and therefore
\begin{equation}\label{eqPm_shifted}
\widehat{\mathcal L}_{\mu_n}(z)
=\frac{1}{d_n}\log\bigl|(mn)!^{-1}P(D)^n(h(z))\bigr|
+\frac{mn+1}{d_n}\log|B(z)|
-\frac{n}{d_n}\log|c_0|
-\frac{1}{d_n}\log|\mathrm{LC}(A)|.
\end{equation}
Letting $n\to\infty$ and using $d_n\sim mnb$, $(mn+1)/d_n\to 1/b$, $n/d_n\to 1/(mb)$, and $\log|\mathrm{LC}(A)|/d_n\to0$, we obtain
$$\widehat{\mathcal L}_{\mu_n}(z)\to \Theta_{m,0}(z).$$

\smallskip
\noindent\textbf{Step 2: $L^1_{\mathrm{loc}}$ convergence.}
Each $\widehat{\mathcal L}_{\mu_n}$ is subharmonic on $\mathbb C$.

Fix a compact set $K\subset\mathbb C$ and choose $\varepsilon>0$ so small that the closed discs $\overline{\mathbb{D}(z_i,\varepsilon)}$ are pairwise disjoint.
Set
$$K_\varepsilon:=\Bigl(K\setminus\bigcup_{i=1}^b \mathbb{D}(z_i,\varepsilon)\Bigr)\cup\bigcup_{i=1}^b \partial\mathbb{D}(z_i,\varepsilon).$$
Then $K_\varepsilon$ is compact and avoids the pole set $S=\{z_1,\dots,z_b\}$, so with $\sigma:=\tfrac12\mathrm{dist}(K_\varepsilon,S)>0$ Cauchy's estimate gives a constant $M_K>0$ such that
$$\sup_{z\in K_\varepsilon}|h^{(k)}(z)|\le M_K\,k!\,\sigma^{-k}\qquad(k\ge0).$$
Combining this with Lemma~\ref{lemDefect} (exactly as in Lemma~\ref{lem_growth_m}) we obtain a constant $C_K$ (independent of $n$) such that
$$\sup_{z\in K_\varepsilon}\bigl|(mn)!^{-1}P(D)^n(h(z))\bigr|\le C_K\,\sigma^{-mn}.$$
Inserting this in \eqref{eqPm_shifted} (and using that $B$ is continuous and nonvanishing on $K_\varepsilon$) shows that
$$\sup_{z\in K_\varepsilon}\widehat{\mathcal L}_{\mu_n}(z)<\infty$$
uniformly in $n$.

Because each $\widehat{\mathcal L}_{\mu_n}$ is subharmonic, the maximum principle implies that its maximum on each closed disc $\overline{\mathbb{D}(z_i,\varepsilon)}$ is attained on the boundary circle $\partial\mathbb{D}(z_i,\varepsilon)\subset K_\varepsilon$.
Hence the same uniform upper bound holds on $\bigcup_{i=1}^b\overline{\mathbb{D}(z_i,\varepsilon)}$ and therefore on all of $K$.

Thus $\{\widehat{\mathcal L}_{\mu_n}\}$ is locally uniformly bounded above, so Hartogs' lemma \cite[Theorem~4.1.9]{Hormander} implies relative compactness in $L^1_{\mathrm{loc}}(\mathbb C)$.
Since Step~1 gives pointwise convergence to $\Theta_{m,0}$ off $V\cup S$ (a set of full planar Lebesgue measure), the only possible $L^1_{\mathrm{loc}}$ cluster point is $\Theta_{m,0}$.
Therefore $\widehat{\mathcal L}_{\mu_n}\to\Theta_{m,0}$ in $L^1_{\mathrm{loc}}(\mathbb C)$.

\smallskip
\noindent\textbf{Step 3: vague convergence of measures.}
Taking distributional Laplacians and using \eqref{tpm4} (note that adding constants does not change $\Delta$) gives
$$\mu_n=\frac{1}{2\pi}\Delta\widehat{\mathcal L}_{\mu_n}.$$
Hence for every $\varphi\in C_c^\infty(\mathbb C)$,
$$\int_{\mathbb C}\varphi\,d\mu_n
=\frac{1}{2\pi}\int_{\mathbb C}\widehat{\mathcal L}_{\mu_n}(z)\,\Delta\varphi(z)\,d\lambda(z)
\longrightarrow
\frac{1}{2\pi}\int_{\mathbb C}\Theta_{m,0}(z)\,\Delta\varphi(z)\,d\lambda(z)
=\int_{\mathbb C}\varphi\,d\left(\frac{1}{2\pi}\Delta\Theta_{m,0}\right),$$
where $d\lambda$ denotes planar Lebesgue measure and we used the $L^1_{\mathrm{loc}}$ convergence from Step~2.
Since $\sup_n\mu_n(\mathbb C)=1$ and $C_c^\infty(\mathbb C)$ is dense in $C_c(\mathbb C)$, this implies vague convergence to $\mu:=(2\pi)^{-1}\Delta\Theta_{m,0}$ (Definition~\ref{def:vague}).
By definition,
$$\Theta_{m,0}(z)=\frac{b-1}{b}\,\Xi(z)-\frac{1}{mb}\log|c_0|,$$
where $\Xi$ is the B\o gvad--H\"agg limit potential.
Since constants have zero Laplacian, Proposition~\ref{prop0} implies
$$\frac{1}{2\pi}\Delta\Theta_{m,0}=\frac{b-1}{b}\,\mu_S=\mu_{c,0},$$
which proves (3).

Finally, (2) follows because the shift term $\log((mn)!)/(mnb+a)\to+\infty$ while $\widehat{\mathcal L}_{\mu_n}(z)\to\Theta_{m,0}(z)$ is finite for $z\notin V\cup S$.
\end{proof}

\begin{remark}[Proof method]\label{rem:L1loc_method}
Our proof of $L^1_{\mathrm{loc}}$ convergence differs from \cite{B:H,Hagg}: rather than explicit integral estimates for logarithmic potentials, we use local uniform upper bounds to obtain relative compactness in $L^1_{\mathrm{loc}}$ (Step~2), and the pointwise limit on $\mathbb C\setminus(V\cup S)$ (Step~1) identifies the unique cluster point.
This approach extends directly to the case $r\ge1$ in Section~\ref{sec5}.
\end{remark}

\begin{corollary}\label{cor:away_from_V}
Let $K\Subset\mathbb C$ be compact with $K\cap V=\emptyset$.
Then $\mu_n(K)\to0$ as $n\to\infty$; equivalently, the number of zeros of $\widetilde A_n$ in $K$ is $o(d_n)$ as $n\to\infty$, where $d_n:=\deg(\widetilde A_n)$.
\end{corollary}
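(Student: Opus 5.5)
The plan is to derive this directly from part (3) of Theorem~\ref{thr5:mainm} together with the fact, from Proposition~\ref{prop0}, that $\mu_S$ is supported on $V_S=V$. The limit measure $\mu_{c,0}=\frac{b-1}{b}\mu_S$ gives zero mass to every compact set disjoint from $V$. What remains is to turn vague convergence into convergence of the masses of the compact set $K$.

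Since $K$ is compact, $V$ is closed and $K\cap V=\emptyset$, we have $\delta:=\mathrm{dist}(K,V)>0$. I will choose a cutoff $\varphi\in C_c(\mathbb C)$ with $0\le\varphi\le1$, $\varphi\equiv1$ on $K$, and $\operatorname{supp}\varphi\subset K^{\delta/2}:=\{z:\mathrm{dist}(z,K)\le\delta/2\}$. For instance,
\[
\varphi(z)=\max\bigl(0,1-2\,\mathrm{dist}(z,K)/\delta\bigr)
\]
works. Then $\operatorname{supp}\varphi$ is compact and disjoint from $V$, hence from $\operatorname{supp}\mu_S$, so $\int\varphi\,d\mu_{c,0}=0$.

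Because the $\mu_n$ are positive, $\mu_n(K)\le\int\varphi\,d\mu_n$. Vague convergence (Theorem~\ref{thr5:mainm}(3)) gives $\int\varphi\,d\mu_n\to\int\varphi\,d\mu_{c,0}=0$, and therefore $0\le\mu_n(K)\to0$.

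The equivalent formulation follows from Definition~\ref{def:zcmeasure}. By that definition, $d_n\mu_n(K)$ is exactly the number of zeros of $\widetilde A_n$ in $K$, counted with multiplicity. So $\mu_n(K)\to0$ says precisely that this count is $o(d_n)$.

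The only point that needs care is that $\mu_S$ really vanishes on the open set $\mathbb C\setminus V$. This holds because $\Xi$ is harmonic on each open cell $V_i^\circ$, including near $z_i$ (Proposition~\ref{prop0}), and $\mathbb C\setminus V=\bigcup_i V_i^\circ$. Hence $\Delta\Xi=0$ there, and the corollary follows.
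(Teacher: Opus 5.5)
Your proof is correct and matches the paper's argument essentially step for step: a cutoff $\varphi\in C_c(\mathbb C)$ with $\varphi\equiv1$ on $K$ and support in a $\delta/2$-neighbourhood of $K$, the bound $\mu_n(K)\le\int\varphi\,d\mu_n$, and vague convergence to $\mu_{c,0}$, which puts no mass off $V$. Your explicit formula for $\varphi$ and your check that $\Xi$ is harmonic on $\mathbb C\setminus V=\bigcup_i V_i^\circ$ only spell out what the paper takes directly from Proposition~\ref{prop0}.
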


\begin{proof}
Since $V$ is closed and $K\cap V=\emptyset$, we have $\delta:=\mathrm{dist}(K,V)>0$.
Choose $\varphi\in C_c(\mathbb C)$ with $0\le \varphi\le1$, $\varphi\equiv1$ on $K$, and
$$\mathrm{supp}(\varphi)\subset\{z\in\mathbb C:\mathrm{dist}(z,K)<\delta/2\}\subset\mathbb C\setminus V.$$
Then $\int \varphi\,d\mu_{c,0}=0$ because $\mu_{c,0}$ is supported on $V$.
Hence
$$\mu_n(K)\le \int_{\mathbb C}\varphi\,d\mu_n \longrightarrow \int_{\mathbb C}\varphi\,d\mu_{c,0}=0,$$
which proves the claim.
\end{proof}

\begin{remark}\label{rem:specialcases}
The limiting measure $\mu_{c,0}=\frac{b-1}{b}\,\mu_S$ depends only on the pole set; the coefficients of $P(D)$ affect the limiting potential only through the additive constant $-(mb)^{-1}\log|c_0|$.
Since each $\mu_n$ is a probability measure, the fact that $\mu_{c,0}(\mathbb C)=(b-1)/b$ means that a mass $1/b$ escapes to infinity (in the sense of vague convergence).

For $m=1$, the operator identity $D+c_0=e^{-c_0 z}\circ D\circ e^{c_0 z}$ (where $e^{c_0 z}$ denotes multiplication by $e^{c_0 z}$) yields $(D+c_0)^n(h)=e^{-c_0 z}D^n(e^{c_0 z}h)$.
Thus Theorem~\ref{thr5:mainm} is equivalent to the $t=1$ case of H\"agg's theorem \cite{Hagg} applied to $f=h\,e^{c_0 z}$.
Normalization of $B$ and $P(D)$ is inessential; see the discussion in the Introduction.
\end{remark}

\section{The case $r\ge1$}\label{sec5}

In Section~\ref{sec4} we treated the case $r=0$.
Here we assume $r:=\operatorname{ord}_0 q\in\{1,\dots,m\}$, so $c_0=\cdots=c_{r-1}=0$ and $c_r\neq0$ (equivalently, $q(x)=x^r\widehat q(x)$ with $\widehat q(0)=c_r$).

After discarding the polynomial part of $h$ (Lemma~\ref{lem:kill_poly_part}), the argument of Theorem~\ref{thr5:mainm} carries over with two changes:
the degree normalization becomes $d_n=a+n(bm-r)$ (instead of $a+bmn$), and the correct shift of logarithmic potentials picks up an extra term $\log((rn)!)/d_n$ coming from the leading-coefficient factor $(a-b)_{rn}$ in Lemma~\ref{lem_deg_Pm}.
Thus the relevant shifted potentials are
$$\widehat{\mathcal L}_{\mu_n}(z):=\mathcal L_{\mu_n}(z)-\frac{\log((mn)!)-\log((rn)!)}{d_n},$$
which is identically $\,\mathcal L_{\mu_n}$ when $r=m$.

\subsection{Reducing to the proper case}

Write $h=A/B$ with $\gcd(A,B)=1$ and let $a=\deg A$, $b=\deg B$.
In contrast to the case $r=0$, when $r\ge1$ the iterates $P(D)^n$ eventually annihilate the polynomial part of $h$.
To avoid bookkeeping, we first reduce to the proper case $\deg A<b$.

\begin{lemma}\label{lem:kill_poly_part}
Assume $c_0=0$ and let $r\in\{1,\dots,m\}$ be minimal such that $c_r\neq0$.
Write $A=QB+R$ with $\deg R<b$ (Euclidean division). Then
$$h=\frac{A}{B}=Q+\frac{R}{B},$$
where $Q$ is the polynomial part of $h$ and $R/B$ is its proper part.
Then $P(D)^n(Q)=0$ for all $n$ with $rn>\deg Q$, and hence
$$P(D)^n(h)=P(D)^n(R/B)$$
for all such $n$.
In particular, for all sufficiently large $n$ we have $P(D)^n(h)=P(D)^n(R/B)$, so in our asymptotic problems we may replace $h$ by its proper part $R/B$ (which is still reduced and has the same simple poles).
\end{lemma}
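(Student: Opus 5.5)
The plan is to exploit the factorization $P(D)=D^r\widehat q(D)$ of the symbol. Since constant-coefficient operators commute, we have $P(D)^n=\widehat q(D)^n\,D^{rn}$. It therefore suffices to show that $D^{rn}Q=0$ whenever $rn>\deg Q$, and then apply $\widehat q(D)^n$ to the zero polynomial. The claim $D^{rn}Q=0$ is immediate: differentiating a polynomial of degree $\deg Q$ more than $\deg Q$ times gives $0$. If $Q=0$, the statement is trivial for all $n$.

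Next, linearity of $P(D)^n$ and the decomposition $h=Q+R/B$ from Euclidean division give
$$P(D)^n(h)=P(D)^n(Q)+P(D)^n(R/B)=P(D)^n(R/B)\qquad\text{for } rn>\deg Q.$$
Since $r\ge1$, this condition holds for all $n>\deg Q$, and in particular for all sufficiently large $n$.

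Finally, I will verify that $R/B$ is reduced and has the same simple poles. If $\gcd(R,B)$ had a nonconstant common factor vanishing at some $z_i$, then $A=QB+R$ would also vanish at $z_i$, contradicting $\gcd(A,B)=1$. For the same reason $R(z_i)=A(z_i)\neq0$ for every $i$, so $R\neq0$ and each $z_i$ remains a simple pole. Because $\deg R<b$, the function $R/B$ is proper, as required by Lemma~\ref{lem_deg_Pm}.

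No step poses a real obstacle. The only points needing care are the commutation $D^r\widehat q(D)=\widehat q(D)D^r$ and the fact that $r\ge1$ is used to make the condition $rn>\deg Q$ eventually true.
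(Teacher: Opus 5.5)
Your proof is correct and follows essentially the paper's argument: both use $P(D)=D^r\widehat q(D)$ and the commutation of constant-coefficient operators. The differences are cosmetic: you apply $D^{rn}$ to $Q$ first, whereas the paper applies it last to $\widehat q(D)^nQ$ (a polynomial of degree at most $\deg Q$), and your check that $R/B$ is reduced with $R(z_i)=A(z_i)\neq0$ supplies a claim the paper states without proof.
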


\begin{proof}
Write $q(x)=x^r\widehat q(x)$ with $\widehat q(0)=c_r\neq0$.
Then $P(D)=q(D)=D^r\widehat q(D)$, and since $D$ commutes with $\widehat q(D)$ we have
$$P(D)^n(Q)=D^{rn}\bigl(\widehat q(D)^nQ\bigr).$$
The polynomial $\widehat q(D)^nQ$ has degree at most $\deg Q$, hence the right-hand side vanishes whenever $rn>\deg Q$.
The identity for $h=Q+R/B$ follows by linearity.
\end{proof}

Henceforth in this section we assume that $h=A/B$ is proper, i.e.\ $\deg A=a<b$; in particular $h(z)\to0$ as $z\to\infty$.

\subsection{The first nonzero coefficient and the modified degree}

Let $r:=\operatorname{ord}_0 q\in\{1,\dots,m\}$; equivalently,
\begin{equation}\label{eq:r_def}
q(x)=x^r\widehat q(x),\qquad \widehat q(0)=c_r\neq0.
\end{equation}
Then
$$P(D)=q(D)=D^{r}\widehat q(D),$$
and $D$ commutes with $\widehat q(D)$ since the coefficients are constant.

As a consequence of \eqref{eq:r_def}, writing
$$q(x)^n=\sum_{k=0}^{mn}p_{n,k}\,x^k$$
we have $p_{n,k}=0$ for $k<rn$ and $p_{n,rn}=c_r^{\,n}$.

We will use Lemma~\ref{lem:Ak_degree_LC} for the degree and leading coefficient of the derivative numerators $A_k$ (Lemma~\ref{lem:Ak}) in the proper case.

\subsection{Limit potentials and measures when $c_0=0$}

\begin{theorem}[The case $r\ge 1$]\label{thr:r_positive}
Let $h(z)=A(z)/B(z)$ be a reduced rational function, where $B$ is monic of degree $b\ge 2$ with distinct zeros $z_1,\dots,z_b$.
Set $S:=\{z_1,\dots,z_b\}$ and let $V:=V_S$ denote the Voronoi diagram of $S$ (Subsection~\ref{sub:sec2}).
Let
$$P(D)=\sum_{j=r}^{m}c_jD^j,\qquad m\ge1,\quad c_m=1,\quad c_r\neq 0,$$
and let $r:=\operatorname{ord}_0 q\in\{1,\dots,m\}$ (so that $c_0=\cdots=c_{r-1}=0$).
Assume that $h$ is proper, i.e., $\deg A=a<b$; by Lemma~\ref{lem:kill_poly_part} this entails no loss of generality for the asymptotic statements below.
Write
$$P(D)^n(h(z))=\frac{\widetilde A_n(z)}{B(z)^{mn+1}},$$
as in Lemma~\ref{lem_deg_Pm}, and set $d_n:=\deg(\widetilde A_n)=a+n(bm-r)$.
Let $\mu_n:=\mu_{\widetilde A_n}$ be the zero-counting measure (Definition~\ref{def:zcmeasure}), and define
$$\widehat{\mathcal L}_{\mu_n}(z):=\mathcal L_{\mu_n}(z)-\frac{\log((mn)!)-\log((rn)!)}{d_n},$$
noting that the correction term is identically $0$ when $r=m$.
Then, as $n\to\infty$:
\begin{enumerate}
\item[(1)] $\widehat{\mathcal L}_{\mu_n}\to \Theta_{m,r}$ in $L^1_{\mathrm{loc}}(\mathbb C)$, where
$$\Theta_{m,r}(z)=\frac{m(b-1)}{bm-r}\,\Xi(z)-\frac{1}{bm-r}\log|c_r|,$$
and $\Xi$ is the B\o gvad--H\"agg potential from \eqref{defXi}.
\item[(2)] If $r<m$, then for every $z\in\mathbb C\setminus(V\cup S)$ we have $\mathcal L_{\mu_n}(z)\to+\infty$.
\item[(3)] $\mu_n$ converges vaguely on $\mathbb C$ to the canonical subprobability measure
$$\mu_{c,r}=\frac{m(b-1)}{bm-r}\,\mu_S,$$
supported on $V$; in particular $\mu_{c,r}(\mathbb C)=\frac{m(b-1)}{bm-r}$.
When $r=m$ (i.e., $P(D)=D^m$), this reduces to $\mu_{c,m}=\mu_S$, recovering the B\o gvad--H\"agg theorem for the subsequence $h^{(mn)}$.
\end{enumerate}
\end{theorem}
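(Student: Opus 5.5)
The plan is to follow the three-step scheme of Theorem~\ref{thr5:mainm} verbatim. The only changes are the new degree $d_n=a+n(bm-r)$ and the new leading coefficient $\mathrm{LC}(\widetilde A_n)=c_r^{\,n}\mathrm{LC}(A)(a-b)_{rn}$ from Lemma~\ref{lem_deg_Pm}. Lemma~\ref{lem_growth_m} did not use $c_0\neq0$, so it applies unchanged for every $r$.

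\textbf{Step 1 (pointwise limit).} Fix $z\notin V\cup S$. Lemma~\ref{lem_growth_m} gives $|(mn)!^{-1}P(D)^n(h(z))|^{1/n}\to\psi_S(z)^{-m}$, so $P(D)^n(h(z))\neq0$ for large $n$. Remark~\ref{rem:logpot_poly} then yields
$$\widehat{\mathcal L}_{\mu_n}(z)=\frac{1}{d_n}\log\bigl|(mn)!^{-1}P(D)^n(h(z))\bigr|+\frac{mn+1}{d_n}\log|B(z)|-\frac{n}{d_n}\log|c_r|-\frac{1}{d_n}\log|\mathrm{LC}(A)|-\frac{1}{d_n}\log\frac{|(a-b)_{rn}|}{(rn)!}.$$
The key extra observation concerns the last term. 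Since $a<b$, we have $|(a-b)_{rn}|=(b-a)(b-a+1)\cdots(b-a+rn-1)=(b-a+rn-1)!/(b-a-1)!$. Hence $|(a-b)_{rn}|/(rn)!$ is a ratio of factorials whose indices differ by the fixed integer $b-a-1\ge0$, so it is polynomial in $n$ and its logarithm is $O(\log n)=o(d_n)$. Using $n/d_n\to1/(bm-r)$ and $(mn+1)/d_n\to m/(bm-r)$, the limit is
$$\frac{-m\log\psi_S(z)+m\log|B(z)|-\log|c_r|}{bm-r}=\frac{m(b-1)}{bm-r}\Xi(z)-\frac{\log|c_r|}{bm-r},$$
which is $\Theta_{m,r}(z)$.

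\textbf{Step 2 ($L^1_{\mathrm{loc}}$ compactness).} This is exactly Step~2 of Theorem~\ref{thr5:mainm}. On the compact set $K_\varepsilon$, Cauchy estimates combined with Lemma~\ref{lemDefect} give $\sup_{K_\varepsilon}|(mn)!^{-1}P(D)^n h|\le C_K\sigma^{-mn}$. Inserting this into the identity above, which holds on $K_\varepsilon$ after using that $B$ does not vanish there, and using that the last three terms are bounded, gives a uniform upper bound for $\widehat{\mathcal L}_{\mu_n}$ on $K_\varepsilon$. The maximum principle on the small discs around the poles extends this bound to all of $K$. Hartogs' lemma then gives relative compactness in $L^1_{\mathrm{loc}}$. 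Any cluster point agrees a.e.\ with $\Theta_{m,r}$ by Step~1, since $V\cup S$ is Lebesgue-null. One should also exclude the degenerate alternative in Hartogs' lemma (convergence to $-\infty$); the finite pointwise limit rules it out. This proves (1).

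\textbf{Step 3 (measures and divergence).} Pairing with $\Delta\varphi$ for $\varphi\in C_c^\infty$, and using that the shifted potentials have Laplacian $2\pi\mu_n$, gives vague convergence to $(2\pi)^{-1}\Delta\Theta_{m,r}$. By Proposition~\ref{prop0} this equals $\frac{m(b-1)}{bm-r}\mu_S$, and density of $C_c^\infty$ in $C_c$ together with $\mu_n(\mathbb C)=1$ gives (3). When $r=m$ the mass is $m(b-1)/(bm-m)=1$ and $c_m=1$, so $\Theta_{m,m}=\Xi$, consistent with Theorem~\ref{thr1}. For (2), when $r<m$ the shift $(\log(mn)!-\log(rn)!)/d_n$ is asymptotically $(m-r)\log n/(bm-r)\to+\infty$ by Stirling, while $\widehat{\mathcal L}_{\mu_n}(z)$ has a finite limit.

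The main obstacle is the bookkeeping in Step~1: recognising that $(a-b)_{rn}$ cancels against $(rn)!$ up to subexponential factors. This is precisely where properness ($a<b$) is needed, since it ensures $(a-b)_{rn}\neq0$ and gives the explicit factorial form.
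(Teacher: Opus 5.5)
Your proposal is correct and follows the paper's proof essentially line by line: the same expansion of $\widehat{\mathcal L}_{\mu_n}(z)$, the same observation that $|(a-b)_{rn}|=(rn+b-a-1)!/(b-a-1)!$ differs from $(rn)!$ only by a factor polynomial in $n$, and the same Hartogs-compactness plus pointwise-identification argument (reused from Theorem~\ref{thr5:mainm}) for Steps 2--3. Your explicit exclusion of the $-\infty$ alternative in Hartogs' lemma and the Stirling asymptotic $(m-r)\log n/(bm-r)$ for the shift are minor clarifications that the paper leaves implicit.
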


The illustration of part $(3)$ of Theorem \ref{thr:r_positive} is shown in Figures \ref{f5}.
	
\begin{figure}[!hbt]
\centering
\includegraphics[width=0.65\linewidth]{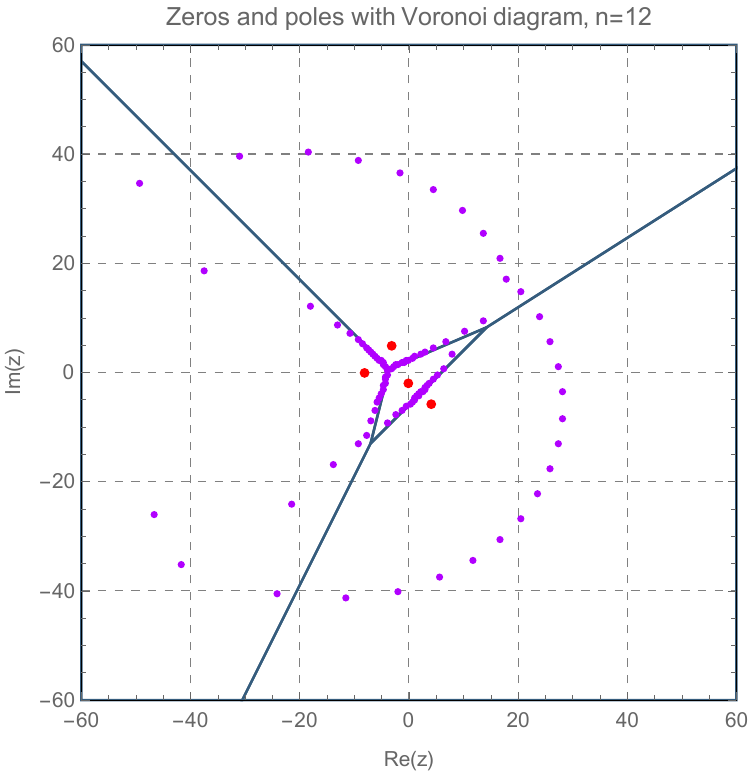}\hskip0.3cm
\caption{\footnotesize{The Voronoi diagram generated by the zeros of the polynomial 
$h(z)=(z+3-5i)(z-4+6i)(z+8)(z+2i)$ together with the zeros of $\left(P(D)\right)^{n}\left(1/h(z)\right)$
 when $n=12$.  Observe that the zeros of $h$ are the poles of $1/h$ (red dots).}}\label{f5}
\end{figure}

\begin{proof}
The proof is the same as Theorem~\ref{thr5:mainm}, except that we use the degree normalization $d_n=\deg(\widetilde A_n)=a+n(bm-r)$ and the leading coefficient $\mathrm{LC}(\widetilde A_n)=c_r^{\,n}\mathrm{LC}(A)(a-b)_{rn}$ from Lemma~\ref{lem_deg_Pm}.
We only indicate the changes.

\smallskip
\noindent\textbf{Step 1: pointwise convergence.}
Fix $z\in\mathbb C\setminus(V\cup S)$.
Lemma~\ref{lem_growth_m} yields
$$\lim_{n\to\infty}\bigl|(mn)!^{-1}P(D)^n(h(z))\bigr|^{1/n}=\psi_S(z)^{-m}.$$
Using $d_n=a+n(bm-r)$ and $\mathrm{LC}(\widetilde A_n)=c_r^{\,n}\mathrm{LC}(A)(a-b)_{rn}$ (Lemma~\ref{lem_deg_Pm}), we obtain
\begin{align}
\widehat{\mathcal L}_{\mu_n}(z)
&=\frac{1}{d_n}\log\bigl|(mn)!^{-1}P(D)^n(h(z))\bigr|
+\frac{mn+1}{d_n}\log|B(z)| \notag\\
&\qquad -\frac{n}{d_n}\log|c_r|
-\frac{1}{d_n}\log\bigl|\mathrm{LC}(A)(a-b)_{rn}\bigr|
+\frac{1}{d_n}\log((rn)!).\label{eqPm_shifted_r}
\end{align}
Since $a<b$, write $\gamma:=b-a>0$. Then
$$(a-b)_{rn}=(-\gamma)(-\gamma-1)\cdots(-\gamma-rn+1)=(-1)^{rn}\,\frac{(rn+\gamma-1)!}{(\gamma-1)!}.$$
Because $\gamma$ is fixed,
$$\log\bigl((rn+\gamma-1)!\bigr)-\log\bigl((rn)!\bigr)=\sum_{j=1}^{\gamma-1}\log(rn+j)=O(\log n),$$
and hence $\log|(a-b)_{rn}|=\log((rn)!)+O(\log n)$ as $n\to\infty$.
Therefore the last two terms in \eqref{eqPm_shifted_r} are $o(1)$, and letting $n\to\infty$ gives $\widehat{\mathcal L}_{\mu_n}(z)\to \Theta_{m,r}(z)$.

\smallskip
\noindent\textbf{Steps 2--3.}
The $L^1_{\mathrm{loc}}$ convergence and the vague convergence of measures follow exactly as in Theorem~\ref{thr5:mainm}.
In particular,
$$\frac{1}{2\pi}\Delta\Theta_{m,r}=\frac{m(b-1)}{bm-r}\,\mu_S=\mu_{c,r}.$$
Finally, when $r<m$ the shift term $(\log((mn)!)-\log((rn)!))/d_n\to+\infty$ off $V\cup S$, giving (2), while when $r=m$ the shift is identically $0$ and (1) yields $\mathcal L_{\mu_n}\to\Theta_{m,m}=\Xi$.
\end{proof}

\begin{corollary}\label{cor:away_from_V_r}
Let $K\Subset\mathbb C$ be compact with $K\cap V=\emptyset$.
Then $\mu_n(K)\to0$ as $n\to\infty$; equivalently, the number of zeros of $\widetilde A_n$ in $K$ is $o(d_n)$ with $d_n=a+n(bm-r)$.
\end{corollary}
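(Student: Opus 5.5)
The argument will mirror the proof of Corollary~\ref{cor:away_from_V}, now using the vague convergence from Theorem~\ref{thr:r_positive}(3) in place of Theorem~\ref{thr5:mainm}(3). The key facts are that $\mu_{c,r}=\frac{m(b-1)}{bm-r}\mu_S$ is supported on $V$, because $\mu_S$ is by Proposition~\ref{prop0}, and that every $\mu_n$ is a positive measure, so any compactly supported continuous function dominating $\mathbf 1_K$ bounds $\mu_n(K)$ from above.

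First, since $V$ is closed and $K$ is compact with $K\cap V=\emptyset$, we have $\delta:=\mathrm{dist}(K,V)>0$; if $V$ is empty this is trivial, but $b\ge2$ makes $V$ nonempty in any case. Next, by Urysohn's lemma (or an explicit choice such as $\varphi(z)=\max\{0,1-2\,\mathrm{dist}(z,K)/\delta\}$), we pick $\varphi\in C_c(\mathbb C)$ with $0\le\varphi\le1$, $\varphi\equiv1$ on $K$, and $\mathrm{supp}\,\varphi\subset\{\mathrm{dist}(\cdot,K)\le\delta/2\}$. That set is compact and disjoint from $V$.

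Positivity then gives $\mu_n(K)\le\int\varphi\,d\mu_n$. By Theorem~\ref{thr:r_positive}(3), the right-hand side tends to $\int\varphi\,d\mu_{c,r}$. This limit equals $0$ because $\varphi$ vanishes on $V\supseteq\mathrm{supp}\,\mu_{c,r}$. Since $\mu_n(K)\ge0$, it follows that $\mu_n(K)\to0$.

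The equivalent formulation comes straight from Definition~\ref{def:zcmeasure}. The number of zeros of $\widetilde A_n$ in $K$, counted with multiplicity, is exactly $d_n\,\mu_n(K)$, and $d_n=a+n(bm-r)$ by Lemma~\ref{lem_deg_Pm}. So that count is $o(d_n)$. There is no real obstacle here. The only point needing care is that the test function's support genuinely avoids $V$, and this is what the positive distance $\delta$ secures.
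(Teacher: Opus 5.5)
Your proposal is correct and follows essentially the same route as the paper, which proves this corollary by repeating the argument of Corollary~\ref{cor:away_from_V}. That argument dominates $\mathbf 1_K$ by a test function $\varphi\in C_c(\mathbb C)$ whose support stays at positive distance from $V$, and then applies the vague convergence $\mu_n\to\mu_{c,r}$ from Theorem~\ref{thr:r_positive}(3) together with $\mathrm{supp}\,\mu_{c,r}\subseteq V$.
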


\begin{proof}
The proof is identical to Corollary~\ref{cor:away_from_V}, using that the vague limit $\mu_{c,r}$ is supported on $V$.
\end{proof}

\begin{remark}\label{rem:c0}
The total mass satisfies $\mu_{c,r}(\mathbb C)=m(b-1)/(bm-r)\le1$, with equality if and only if $r=m$, i.e.\ $P(D)=D^m$.
In this extreme case $\Theta_{m,m}=\Xi$ and Theorem~\ref{thr:r_positive} reduces to the B\o gvad--H\"agg theorem (Theorem~\ref{thr1}) applied to the subsequence of derivatives of order $mn$.
More generally, the escaped mass equals
$$1-\mu_{c,r}(\mathbb C)=\frac{m-r}{bm-r},$$
so the amount of escape decreases as the order of vanishing of $q$ at $0$ increases.
Since $d_n=a+n(bm-r)$, this corresponds to about $(m-r)n$ zeros of $\widetilde A_n$ escaping to infinity.
\end{remark}

\section{Conclusion}\label{sec6}

We extend the measure-theoretic refinements of P\'olya's Shire theorem developed in \cite{B:H,Hagg} from pure derivatives to iterates of arbitrary monic constant-coefficient differential operators $P(D)$ of order $m\ge1$ acting on reduced rational functions with simple poles.
Let $q$ be the symbol of $P(D)$ and set $r:=\operatorname{ord}_0 q\in\{0,\dots,m\}$.
Theorems~\ref{thr5:mainm} and \ref{thr:r_positive} show that, after the appropriate degree normalization (and, when $r>0$, after discarding the polynomial part of $h$), the zero-counting measures of the numerator polynomials in $P(D)^n(h)$ converge vaguely to the canonical subprobability measure
$$\mu_{c,r}=\frac{m(b-1)}{bm-r}\,\mu_S \qquad (c\ \text{for ``canonical''}),$$
supported on the Voronoi diagram of the pole set.

When $r<m$, the unshifted logarithmic potentials diverge, but the factorially renormalized potentials
$$\mathcal L_{\mu_n}-\frac{\log((mn)!)-\log((rn)!)}{d_n}\qquad\bigl(d_n=\deg(\widetilde A_n)\bigr)$$
converge in $L^1_{\mathrm{loc}}(\mathbb C)$ to an explicit limit potential (and no renormalization is needed when $r=m$).
See also Remark~\ref{rem:specialcases} for the reduction to the case $m=1$, and Appendix~\ref{app:onepole} for the degenerate one-pole case $b=1$.

\medskip
\noindent\textbf{Further directions.}
Within the setting of rational functions with simple poles, the constant-coefficient case is now settled for all monic operators $P(D)$, including those with $c_0=0$.
Recent work of B\o gvad--Shapiro--Tahar--Warakkagun suggests that P\'olya-type ``Voronoi/Shire'' phenomena persist in much greater generality (e.g.\ on compact Riemann surfaces for iterations of a first-order operator $f\mapsto df/\omega$) \cite{BSTW}.
Natural next problems include: allowing multiple poles of $h$, treating higher-order poles and other singularities, and understanding to what extent similar Voronoi-skeleton limits persist for variable-coefficient differential operators or for certain classes of linear recurrence operators.

\subsection*{Acknowledgments}
The authors thank Boris Shapiro and Rikard B\o gvad for helpful discussions and corrections.

\appendix

\section{Auxiliary coefficient estimates for powers of polynomials}\label{app:coefficients}

\begin{lemma}[Defect coefficients]\label{lemDefect}
Let $q(x)=x^m+c_{m-1}x^{m-1}+\cdots+c_0$ and write $q(x)^n=\sum_{k=0}^{mn}p_{n,k}x^k$.
Fix $\ell\ge0$.
Then $p_{n,mn}=1$ and, for $\ell\ge1$, as $n\to\infty$,
$$p_{n,mn-\ell}=\binom{n}{\ell}\,c_{m-1}^{\,\ell}+O(n^{\ell-1}),$$
where the implicit constant depends on $q$ and $\ell$.
Moreover, for every $n\ge1$ and every $\ell\ge0$ with $\ell\le mn$,
$$\left|p_{n,mn-\ell}\right|\frac{(mn-\ell)!}{(mn)!}
\le [t^\ell]\exp\left(\frac{|c_{m-1}|}{m}t+\frac{|c_{m-2}|}{m}t^2+\cdots+\frac{|c_0|}{m}t^m\right),$$
where $[t^\ell]$ denotes the coefficient of $t^\ell$.
In particular, for every $t\ge0$,
$$\sum_{\ell=0}^{mn}\left|p_{n,mn-\ell}\right|\frac{(mn-\ell)!}{(mn)!}\,t^\ell
\le \exp\left(\frac{|c_{m-1}|}{m}t+\frac{|c_{m-2}|}{m}t^2+\cdots+\frac{|c_0|}{m}t^m\right).$$
\end{lemma}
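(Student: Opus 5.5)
The idea is to factor the symbol over $\mathbb C$ and read off the coefficients of $q(x)^n$ as a multinomial expansion; this gives both the exact asymptotic and a clean majorant. Write $q(x)=x^m\,g(1/x)$ with
$$g(s):=1+c_{m-1}s+c_{m-2}s^2+\cdots+c_0s^m,$$
so that $p_{n,mn-\ell}=[s^\ell]\,g(s)^n$. The claim $p_{n,mn}=1$ is immediate.

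For the asymptotic, expand $g(s)^n=\sum \binom{n}{j_1,\dots,j_m}\prod_i (c_{m-i}s^i)^{j_i}$, where the $j_i$ record how many factors contribute the term $c_{m-i}s^i$. The coefficient of $s^\ell$ is a finite sum, with the number of terms depending only on $\ell$, over tuples satisfying $\sum i j_i=\ell$. Each such term is a polynomial in $n$ of degree $\sum j_i$. The top degree $\ell$ occurs only when $j_1=\ell$ and $j_i=0$ for $i\ge2$, which gives $\binom n\ell c_{m-1}^\ell$. Every other tuple has $\sum j_i\le \ell-1$ and contributes $O(n^{\ell-1})$, with the constant depending only on $q$ and $\ell$.

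For the uniform bound, take the same multinomial sum with absolute values. It gives
$$|p_{n,mn-\ell}|\le \sum_{\sum i j_i=\ell}\frac{n!}{(n-J)!\,\prod_i j_i!}\prod_i |c_{m-i}|^{j_i},\qquad J:=\sum_i j_i.$$
The right side of the lemma is
$$[t^\ell]\exp\Bigl(\sum_i \tfrac{|c_{m-i}|}{m}t^i\Bigr)=\sum_{\sum i j_i=\ell}\prod_i \frac{(|c_{m-i}|/m)^{j_i}}{j_i!}.$$
So it suffices to prove, term by term,
$$\frac{(n)_J}{m^{-J}}\cdot\frac{(mn-\ell)!}{(mn)!}\le 1,\qquad\text{i.e.}\qquad m^J (n)_J\le (mn)_\ell .$$
Since $\ell\ge J$, $(mn)_\ell\ge (mn)_J$ once we check $mn-J+1\ge1$, and the extra factors are at least $1$ provided $\ell\le mn$; this holds. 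Then $(mn)_J=\prod_{k=0}^{J-1}(mn-k)\ge \prod_{k=0}^{J-1} m(n-k)=m^J(n)_J$, because $mn-k\ge mn-mk$. If $J>n$ the term vanishes anyway. The one case to watch is a term with $(mn-\ell)$ small, where the factors of $(mn)_\ell$ beyond the first $J$ must be at least $1$; they are $mn-k$ for $k<\ell\le mn$, hence at least $1$.

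The summed inequality follows by multiplying each coefficient bound by $t^\ell\ge0$ and summing; the right-hand side is dominated by the full exponential series, whose terms are all nonnegative. The main obstacle is the term-by-term comparison $m^J(n)_J\le(mn)_\ell$. It requires using the $1/m$ normalization exactly once per selected nonleading factor and not per unit of degree, and the factorization above shows this is enough.
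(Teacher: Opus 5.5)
Your proposal is correct and follows essentially the same route as the paper: the same multinomial expansion over tuples with $\sum_i i\,j_i=\ell$, the main term $\binom{n}{\ell}c_{m-1}^{\,\ell}$ coming only from $(\ell,0,\dots,0)$, and the termwise comparison $m^J(n)_J\le (mn)_J\le (mn)_\ell$. That comparison is exactly the paper's estimate $\frac{n!}{(n-s)!}\frac{(mn-\ell)!}{(mn)!}\le\prod_{u=0}^{s-1}\frac{n-u}{mn-u}\le m^{-s}$. Your opening remark about factoring the symbol over $\mathbb C$ plays no role, since the argument only uses the reversed polynomial $g(s)$ and its multinomial expansion.
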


\begin{proof}
The identity $p_{n,mn}=1$ is immediate.
Fix $\ell\ge1$.
To produce $x^{mn-\ell}$ in $q(x)^n$ we must choose, among the $n$ factors, lower-order terms whose total defect is $\ell$.
Write $s_j$ for the number of times we choose the term $c_{m-j}x^{m-j}$, so that
$$s_0+s_1+\cdots+s_m=n,
\qquad
s_1+2s_2+\cdots+ms_m=\ell,
\qquad
s:=s_1+\cdots+s_m\le \ell.$$
The multinomial expansion gives
$$p_{n,mn-\ell}
=\sum_{\substack{s_1,\dots,s_m\ge0\\ s_1+2s_2+\cdots+ms_m=\ell}}
\frac{n!}{(n-s)!\,s_1!\cdots s_m!}\,c_{m-1}^{\,s_1}\cdots c_0^{\,s_m}.$$
The tuple $(s_1,\dots,s_m)=(\ell,0,\dots,0)$ yields the main term $\binom{n}{\ell}c_{m-1}^{\,\ell}$, and every other admissible tuple satisfies $s\le \ell-1$, so its multinomial prefactor is $O(n^{\ell-1})$.
This proves the asymptotic formula.

For the coefficient bound, multiply each summand by $(mn-\ell)!/(mn)!$ and observe that
$$\frac{n!}{(n-s)!}\,\frac{(mn-\ell)!}{(mn)!}
\le \frac{n!}{(n-s)!}\,\frac{(mn-s)!}{(mn)!}
=\prod_{u=0}^{s-1}\frac{n-u}{mn-u}\le m^{-s},$$
since $mn-u\ge m(n-u)$ for $0\le u\le s-1$.
Therefore
$$\left|p_{n,mn-\ell}\right|\frac{(mn-\ell)!}{(mn)!}
\le \sum_{\substack{s_1,\dots,s_m\ge0\\ s_1+2s_2+\cdots+ms_m=\ell}}
\prod_{j=1}^{m}\frac{1}{s_j!}\left(\frac{|c_{m-j}|}{m}\right)^{s_j}.$$
The right-hand side is exactly the coefficient of $t^\ell$ in
$\prod_{j=1}^{m}\exp\!\bigl((|c_{m-j}|/m)t^j\bigr)$, which equals the claimed exponential.
The final generating-function inequality follows by multiplying the coefficient bound by $t^\ell$ and summing over $\ell$.
\end{proof}

\section{The one-pole case}\label{app:onepole}

Although we assume $b\ge2$ in the main theorems, the case $b=1$ is degenerate.
If $r=m$ (so $P(D)=D^m$), then $P(D)^n(h)=h^{(mn)}$ has constant numerator and the normalized zero-counting measures are not informative.
In the remaining case $r<m$, the numerator degrees tend to $\infty$ and all mass escapes to infinity.

\begin{proposition}[One pole]\label{prop:onepole}
Let $h=A/B$ be reduced with $B(z)=z-z_1$ (so $h$ has exactly one simple pole), and let $P(D)$ be a monic constant-coefficient differential operator of order $m\ge1$ with symbol $q$.
Let $r=\operatorname{ord}_0 q<m$ and let $c_r$ be the first nonzero coefficient of $q$.
If $r>0$, we replace $h$ by its proper part (Lemma~\ref{lem:kill_poly_part}), so that $\deg A=0$.
For $n\ge1$ write
$$P(D)^n(h(z))=\frac{\widetilde A_n(z)}{(z-z_1)^{mn+1}},$$
set $d_n:=\deg(\widetilde A_n)$, and let $\mu_n:=\mu_{\widetilde A_n}$.
Define the shifted potentials
$$\widehat{\mathcal L}_{\mu_n}(z):=\mathcal L_{\mu_n}(z)-\frac{\log((mn)!)-\log((rn)!)}{d_n}.$$
Then $d_n=a+n(m-r)\to\infty$, where $a=\deg A$ (so $a=0$ when $r>0$ after passing to the proper part), and:
\begin{enumerate}
\item[(i)] $\widehat{\mathcal L}_{\mu_n}\to -\frac{1}{m-r}\log|c_r|$ in $L^1_{\mathrm{loc}}(\mathbb C)$;
\item[(ii)] $\mu_n\to0$ vaguely on $\mathbb C$ (all mass escapes to infinity).
\end{enumerate}
\end{proposition}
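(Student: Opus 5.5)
The plan is to follow the three-step scheme of Theorem~\ref{thr5:mainm}: pointwise limits, then $L^1_{\mathrm{loc}}$ compactness, then distributional Laplacians. The difference is that with one pole the Voronoi diagram is empty, so the pointwise limit holds on all of $\mathbb C\setminus\{z_1\}$.

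\textbf{Step 1: degree and leading coefficient.} Lemma~\ref{lem_deg_Pm} was stated for $b\ge1$, so it applies directly. It gives $d_n=a+n(m-r)$, which tends to $\infty$ because $r<m$. It also gives $\mathrm{LC}(\widetilde A_n)=c_r^{\,n}\mathrm{LC}(A)(a-b)_{rn}$. When $r>0$ we have $a=0$ and $b=1$, so $(a-b)_{rn}=(-1)_{rn}=(-1)^{rn}(rn)!$ exactly. When $r=0$ this factor is $1$, so again $\log|(a-b)_{rn}|=\log((rn)!)$.

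\textbf{Step 2: pointwise limit.} Fix $z\ne z_1$. Here $\psi_S(z)=|z-z_1|$ and the nearest pole is trivially unique. The second part of Lemma~\ref{lem_growth_m} holds verbatim for $b=1$, since its proof never uses $b\ge2$. It gives
\[
\bigl|(mn)!^{-1}P(D)^n(h(z))\bigr|^{1/n}\to|z-z_1|^{-m}.
\]
Substitute this into the analogue of \eqref{eqPm_shifted_r} with $B(z)=z-z_1$. The terms
\[
\tfrac1{d_n}\log\bigl|(mn)!^{-1}P(D)^n h(z)\bigr| \quad\text{and}\quad \tfrac{mn+1}{d_n}\log|z-z_1|
\]
tend to $-\tfrac{m}{m-r}\log|z-z_1|$ and $+\tfrac{m}{m-r}\log|z-z_1|$ respectively, so they cancel in the limit. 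The $(rn)!$ contributions cancel exactly. The term $-\tfrac{n}{d_n}\log|c_r|$ tends to $-\tfrac1{m-r}\log|c_r|$, and $\tfrac{1}{d_n}\log|\mathrm{LC}(A)|\to0$. Hence
\[
\widehat{\mathcal L}_{\mu_n}(z)\to-\tfrac{1}{m-r}\log|c_r| \qquad\text{for every } z\ne z_1 .
\]

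\textbf{Step 3: $L^1_{\mathrm{loc}}$ convergence and vague convergence.} Copy Step~2 of Theorem~\ref{thr5:mainm}. On the compact set $K_\varepsilon$, Cauchy estimates together with Lemma~\ref{lemDefect} bound $(mn)!^{-1}|P(D)^n h|$ by $C_K\sigma^{-mn}$. Then \eqref{eqPm_shifted_r} gives a uniform upper bound for $\widehat{\mathcal L}_{\mu_n}$ on $K_\varepsilon$. The maximum principle on the disc around $z_1$ extends this bound to all of $K$.

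Hartogs' lemma then gives relative compactness in $L^1_{\mathrm{loc}}$. The only possible cluster point is the constant from Step~2, because the pointwise limit holds off a single point. This proves (i). Taking Laplacians against test functions gives $\int\varphi\,d\mu_n\to\frac1{2\pi}\int c\,\Delta\varphi=0$ for the constant $c=-\tfrac1{m-r}\log|c_r|$. Density of $C_c^\infty$ in $C_c$ and $\mu_n(\mathbb C)=1$ then give (ii).

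The main obstacle is Hartogs' lemma. It only yields compactness if the sequence does not tend to $-\infty$ locally uniformly, so one must rule that out. The pointwise finite limit at every point of $\mathbb C\setminus\{z_1\}$ does this, so the step goes through exactly as in the main theorem.
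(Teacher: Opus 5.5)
Your proof is correct and follows the paper's argument essentially step for step. You take the pointwise limit off $z_1$ via Lemma~\ref{lem_growth_m}, with exact cancellation of the $(rn)!$ terms and of the $\log|z-z_1|$ terms; you obtain Hartogs compactness exactly as in Step~2 of Theorem~\ref{thr5:mainm}; and you pass to distributional Laplacians. Your explicit remark that the finite pointwise limit rules out the locally uniform $-\infty$ alternative in Hartogs' lemma is a detail the paper leaves implicit.
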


\begin{proof}
If $r>0$, write $A=Q(z-z_1)+R$ with $R\neq0$ constant.
By Lemma~\ref{lem:kill_poly_part} (applied with $B(z)=z-z_1$) we have $P(D)^n(h)=P(D)^n(R/(z-z_1))$ for all sufficiently large $n$.
Hence we may replace $h$ by its proper part and assume $\deg A=a=0$.

Fix $z\neq z_1$.
Since the nearest pole is unique, Lemma~\ref{lem_growth_m} yields
$$\lim_{n\to\infty}\bigl|(mn)!^{-1}P(D)^n(h(z))\bigr|^{1/n}=|z-z_1|^{-m}.$$
Because $d_n=a+n(m-r)$, this implies
$$\frac{1}{d_n}\log\bigl|(mn)!^{-1}P(D)^n(h(z))\bigr|\longrightarrow -\frac{m}{m-r}\log|z-z_1|.$$
On the other hand, Lemma~\ref{lem_deg_Pm} (with $b=1$) gives
$$\mathrm{LC}(\widetilde A_n)=c_r^n\,\mathrm{LC}(A)\,(a-1)_{rn}.$$
Arguing as in \eqref{eqPm_shifted_r} we obtain
\begin{align*}
\widehat{\mathcal L}_{\mu_n}(z)
&= \frac{1}{d_n}\log\bigl|(mn)!^{-1}P(D)^n(h(z))\bigr|+\frac{mn+1}{d_n}\log|z-z_1|\\
&\qquad -\frac{n}{d_n}\log|c_r|-\frac{\log|\mathrm{LC}(A)|}{d_n}-\frac{\log|(a-1)_{rn}|-\log((rn)!)}{d_n}.
\end{align*}
If $r>0$ then $a=0$ and $(a-1)_{rn}=(-1)^{rn}(rn)!$, so the last term is identically $0$; if $r=0$ it is also $0$.
The two $\log|z-z_1|$ terms cancel in the limit, and therefore
$$\widehat{\mathcal L}_{\mu_n}(z)\longrightarrow -\frac{1}{m-r}\log|c_r|$$
for each $z\neq z_1$.

As in Step~2 of Theorem~\ref{thr5:mainm}, Hartogs' lemma implies that the family $\{\widehat{\mathcal L}_{\mu_n}\}$ is relatively compact in $L^1_{\mathrm{loc}}(\mathbb C)$.
Since the pointwise limit holds on $\mathbb C\setminus\{z_1\}$, we get $\widehat{\mathcal L}_{\mu_n}\to -\frac{1}{m-r}\log|c_r|$ in $L^1_{\mathrm{loc}}(\mathbb C)$.
Taking distributional Laplacians then gives $\mu_n\to0$ vaguely.
\end{proof}

\begin{remark}
If $h$ has a single pole at $z_1$ of order $p>1$, write its principal part as a linear combination of derivatives of $(z-z_1)^{-1}$ and use that $P(D)$ commutes with $D$.
The same argument shows that, whenever the numerator degrees tend to $\infty$, the normalized zero-counting measures have no nontrivial finite vague limit on $\mathbb C$; all mass escapes to infinity (compare \cite{PJS1}).
\end{remark}

\end{document}